\documentclass[10pt]{article}

\usepackage{amssymb}
\usepackage{amsmath}
\usepackage{amsthm}
\usepackage{enumerate}
\usepackage{verbatim}
\usepackage{color}

\newtheorem{theorem}{Theorem}
\newtheorem{definition}[theorem]{Definition}
\newtheorem{lemma}[theorem]{Lemma}

\newtheorem{corollary}[theorem]{Corollary}

\setlength{\textheight}{22.2cm} \setlength{\textwidth}{15.7cm}
\setlength{\oddsidemargin}{4mm}
\setlength{\evensidemargin}{4mm}
\setlength{\topmargin}{-15mm}

\newcommand{\Var}{\mbox{\rm Var}}
\def\qed{\unskip\nobreak\hfill\penalty50\hskip 3pt\hbox{}\nobreak
\hfill\hbox{\vrule width 4 pt height 10 pt}}

\begin{document}
\bibliographystyle{plain}

\title {Entrance Time and R\'enyi Entropy}
\author{ Chinmaya Gupta \thanks {Mathematics Department, University of Houston.
E-mail: $<$ccgupta@math.uh.edu$>$.} \and Nicolai
Haydn\thanks{Mathematics Department, USC, Los Angeles, 90089-1113.
E-mail: $<$nhaydn@math.usc.edu$>$.} \and Milton Ko \thanks
{Mathematics Department, USC, Los Angeles, 90089-1113. E-mail:
$<$miltonko@gmail.com$>$.} \and Erika A. Rada-Mora \thanks{
Institute of Mathematic and Statistic, University of S\~ao Paulo ,
05508-090,  Brazil. E-mail:$<$alejarada@gmail.com$>$. The fourth
author was supported by CNPq-Brazil Procs. 143256/2009-2 and
140114/2012-2.}}

\maketitle

\begin{abstract} For ergodic systems with generating partitions, the well known result of Ornstein
and Weiss shows that the exponential growth rate of the recurrence time is almost surely equal to
the metric entropy. Here we look at the exponential growth rate of entrance times, and show that it
equals the entropy, where the convergence is in probability in the product measure.
This is however under the assumptions that the limiting entrance times distribution exists almost surely.
This condition looks natural in the light of an example by Shields in which the limsup in the exponential
growth rate is infinite almost everywhere but where the limiting entrance times do not exist.
We then also consider $\phi$-mixing systems and prove a result connecting the R\'enyi entropy
to sums over the entrance times orbit segments.
\end{abstract}

\section{Introduction}
Let $T$ be a map on a space $\Omega$, then $\{T^i(x) \}_{i=0}^{\infty}$ defines the orbit of $x \in \Omega$. For  a set $A \subset \Omega$, the \emph{entrance time} $\tau_A$ of a point $x$ into the set $A$ refers to the time that takes for the orbit of $x$ to first enter the set $A$.  In particular, if $x \in A$, $\tau_A$ refers to the \emph{return time} of the point $x$: the time that it takes the orbit of  $x$  to return for the first time to the set $A$.
For invariant probability measures $\mu$ the Poincar\'e Recurrence Theorem states that a point in a positive
measure set returns to that set almost surely. In other words $\tau_A(x)<\infty$ for almost
every $x$ in $A$, provided $\mu(A)>0$. In 1946 this result was quantified by Kac who
showed that for ergodic measures the expected return time is the reciprocal of the measure of the return
set. If the space $\Omega$ has a generating partition $\cal A$ then in 1993, Ornstein and Weiss~\cite{OW}
proved for ergodic measures $\mu$ that $\frac 1 n \log \tau_n$ converges to the entropy $h_{\mu}$ almost surely,  where the {\em $n$-th recurrence time} $\tau_n(x)=\tau_{A_n(x)}(x)$ measures the time for $x$   to return to its initial $n$-cylinder  $A_n(x)$.

Intuitively, the entrance time should behave similar to the return time in ergodic systems, as in such systems when a point $x$ travels long enough it tends to forget where it started. If we assume $\Omega$ has
 a partition $\cal A$, then it is natural to consider the exponential growth rate of entrance times to
 the $n$-cylinders $A_n(z)$ centred at an arbitrary point $z$.
However, Shields~\cite{PS} in 1992 constructed an example  in which $\frac1n\log \tau_{A_n(z)}(x)$ does
 not converge for almost every $x$. In fact the $\limsup$ goes to infinity almost surely.
 Here we impose an additional assumption in order to get convergence in probability to the metric
 entropy. We require that the limiting entrance times distributions exist almost everywhere.

We then also give a condition under which the convergence of is almost surely. We then
also look at $\phi$-mixing measures and show that they satisfy this conditions and thus
have almost sure convergence of exponential growth rate of entrance times.
In the last theorem we consider the  \emph{R\'enyi entropy} which was first introduced by
Alfr\'ed R\'enyi~\cite{RE} in 1961 in order to generalize the Shannon entropy. Here we
generalize a result of  Ko~\cite{Ko} which had been proven for return times to entrance times.
For $\phi$-mixing systems we obtain in Theorem~\ref{main3} a relationship between entrance time
and the R\'enyi entropy.

In section~\ref{results}, we state definitions, basic facts and the four main theorems that we will
prove in this paper. Theorem~\ref{main1} proves the convergence of the entrance time in
probability while Theorem~\ref{main2} proves the almost sure convergence of entrance time under
an additional assumption. Theorem~\ref{bridge} verifies that additional assumption for $\phi$-mixing
measures. Theorem ~\ref{main3} considers the sum of measures of $n$-cylinders
 visited by a point along its orbit until it enters a set, and proves that it converges to a constant in
 terms of the R\'enyi entropy and metric entropy for $\phi$-mixing systems. The proofs of
 Theorem~\ref{main1} and~\ref{main2} are given in Section~\ref{proof.entrance.time},
 the proof of Theorem~\ref{bridge} is in section~\ref{proof.bridge} while the proof of Theorem~\ref{main3} is given in Section~\ref{EMR}.

\section{Main Results} \label{results}
Let $\Omega$ be a space with a probability measure $\mu$ and $T:\Omega \rightarrow \Omega$
be a measurable map. We assume $\mu$ is $T$-invariant and ergodic. Let ${\cal A} = \{ {\cal P}_i \}$
 be a generating partition (finite or countably infinite) and denote by
  ${\cal A}^n = \bigvee_{i=0}^{n-1}T^{-i}{\cal A}= \left \{ \bigcap_{0 \leq i \leq n-1}T^{-i}({\cal P}_{j_i}) : {\cal P}_{j_i} \in {\cal A} \right \}$ its $n$-th join. The elements of $\mathcal{A}^n$ are referred to as $n$-cylinders.
 We denote by $A_n(x) \in {\cal A}^n$ the $n$-cylinder which contains the point $x\in\Omega$.

The theorem of Shannon-McMillan-Breiman (see e.g.~\cite{Man}) states that for any $T$-invariant ergodic probability measure $\mu$ and generating partition $\cal A$ of $\Omega$,
\begin{equation} \label{SMB}
\lim_{n \rightarrow \infty} \frac{1}{n} \left | \log \mu(A_n(x)) \right | = h_{\mu}
\end{equation}
for almost every $x \in \Omega$, where $h_\mu$ is the measure theoretic entropy of $\mu$.
 This asymptotic formula was first proven by Shannon~\cite{Shannon} in 1948 for
stationary Markov chains and then subsequently strengthened by McMillan and Breiman to its present
form for finite alphabets and then extended to countably infinite alphabets (with finite entropy) by
Chung~\cite{Chung} in 1961 and Carleson~\cite{Carleston}  in 1958. In other words, the measure of the $n$-cylinder which contains $x$ decays exponentially with rate roughly the metric entropy.

For any $x \in \Omega$ and set $A \subset \Omega$, let us define now the {\em entrance time}
of $x$  into the set $A$ by
$$
\tau_A(x) = \min \{ i \geq 1 : T^i(x) \in A \}.
$$
We call $\tau_n(x) = \tau_{A_n(x)}(x)$ the  $n$-th {\em recurrence time} of $x$; it is
the first time that $x$ returns to the $n$-cylinder which contain $x$. Ornstein and Weiss proved  in~\cite{OW}
for finite partition, and in \cite{OW2} for countably infinite partition (provided that $h_{\mu}$ is finite) that for almost every $x$,
\begin{equation} \label{OW1}
\lim_{n \rightarrow \infty} \frac{1}{n} \log \tau_n(x) = h_{\mu}
\end{equation}
assuming $\mu$ is ergodic.
Intuitively, the entrance time $\tau_{A_n(z)}(x)$ should behave similarly to the recurrence time
$\tau_n(z) = \tau_{A_n(z)}(z)$ as in~(\ref{OW1}), since when points travel a long enough time in
ergodic systems they tend to forget where they start and hence whether starting at the point $x$ or $z$
should not matter. However, Shields constructed in 1992 an example of a dynamical system in
which the entrance time fails to converge~\cite{PS}. Here we prove that $\frac1n \log \tau_{A_n(z)}(x)$
converges in probability to $h_\mu$ provided the system has an almost sure entrance times distribution.

In the following we adopt probability notations that for events $A, B\subset \Omega$ we denote $\mu(A )$ by $\mathbb{P}(A)$ and $\mu_B(A) = \mu(B\cap  A) / \mu(B)$ by $\mathbb{P}_B(A)$
(assuming $\mu(B)>0$). For $z \in \Omega$, $n \in \mathbb{N}$ and $t > 0$, put
$$
F_z^n(t) = \mathbb{P}\left( \tau_{A_n(z)} \geq \frac{t}{\mu(A_n(z))} \right) = \mu \left( \left \{x \in \Omega : \tau_{A_n(z)}(x) \geq \frac{t}{\mu(A_n(z))} \right \} \right)
$$
and if $B = A_n(z)$ we put
$$
F_B(t) = F_z^n(t). \label{def:F_B}
$$
We shall require that the limit $\lim_{n\rightarrow\infty}F_z^n$ exists almost everywhere.
For a number of classes of positive entropy systems this limit is $e^{-t}$ a.s.. There are
however examples of ergodic zero entropy systems that have other limiting distributions.

The following two theorems prove convergences of the entrance time: Theorem~\ref{main1} proves
the existence of the limit and convergence in probability under the assumption that the limiting
entrance (or return) times exists almost everywhere. Theorem~\ref{main2} gives a sufficient condition
under which the convergence is almost sure. Let us note that there are many examples when
the limiting entrance/return times do not exist. The example of Shields is one of them. Also,
Downarowicz~\cite{Down} has given examples when the limiting distribution exists along subsequences
of full density and where the limit can to be made to decay arbitrarily slowly, in particular so
slow as to violate the condition in Theorem~\ref{main2}.

\begin{theorem} \label{main1}
Suppose for almost every $z \in \Omega$ and for $t \geq 0$, $\lim_{n \rightarrow \infty} F_z^n(t) = F_z(t)$ exists and $F_z(t)\to 0$ as $t\to\infty$. 
Then $\frac{1}{n} \log \tau_{A_n(z)}(x)$ converges to $h_{\mu}$ in probability as $n$ goes to $\infty$.
\end{theorem}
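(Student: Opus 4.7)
The plan is to verify convergence in probability on the product space $\Omega\times\Omega$ by splitting the bad event into upper and lower tails and estimating each. For every $\epsilon>0$ it suffices to show that
\begin{equation*}
\int\mu\!\left(\left\{x:\tau_{A_n(z)}(x)\leq e^{n(h_\mu-\epsilon)}\right\}\right)d\mu(z)\longrightarrow 0
\end{equation*}
together with the analogous statement with $\geq e^{n(h_\mu+\epsilon)}$ on the other side. Shannon--McMillan--Breiman pins $\mu(A_n(z))$ down to within an exponentially small factor of $e^{-nh_\mu}$ for $\mu$-a.e.\ $z$, and the hypothesis on $F_z^n$ controls the upper tail; bounded convergence will then take care of the integration in $z$ in both cases.

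For the lower tail, a union bound combined with the $T$-invariance of $\mu$ gives, with $M=\lfloor e^{n(h_\mu-\epsilon)}\rfloor$,
\begin{equation*}
\mu\!\left(\tau_{A_n(z)}(x)\leq M\right)\leq\sum_{i=1}^{M}\mu(T^{-i}A_n(z))=M\,\mu(A_n(z)).
\end{equation*}
By SMB, $\mu(A_n(z))\leq e^{-n(h_\mu-\epsilon/2)}$ for $\mu$-a.e.\ $z$ and all $n$ sufficiently large (depending on $z$), so the integrand is eventually bounded by $e^{-n\epsilon/2}$; bounded convergence finishes this tail.

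For the upper tail, apply the definition of $F_z^n$ directly: with $M=\lceil e^{n(h_\mu+\epsilon)}\rceil$,
\begin{equation*}
\mu\!\left(\tau_{A_n(z)}(x)\geq M\right)=F_z^n\!\left(M\mu(A_n(z))\right).
\end{equation*}
For $\mu$-a.e.\ $z$, SMB also gives $\mu(A_n(z))\geq e^{-n(h_\mu+\epsilon/2)}$ eventually, hence $M\mu(A_n(z))\geq e^{n\epsilon/2}\to\infty$. Since $F_z^n$ is non-increasing in its argument, for any fixed $T>0$ we have $F_z^n(M\mu(A_n(z)))\leq F_z^n(T)$ once $n$ is large; the hypothesis then yields $\limsup_{n\to\infty} F_z^n(M\mu(A_n(z)))\leq F_z(T)$, and letting $T\to\infty$ with $F_z(T)\to 0$ shows the integrand tends to $0$ pointwise in $z$. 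A second application of bounded convergence gives convergence of the integral.

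The main obstacle is the interchange of limits in the upper-tail argument: the threshold $M\mu(A_n(z))$ itself depends on $n$, whereas the hypothesis supplies convergence of $F_z^n(t)$ only at fixed $t$. This is handled precisely by combining the monotonicity of $F_z^n$ in its second argument with the tail condition $F_z(t)\to 0$, which together let the limit in $n$ be taken before the limit in $T$. Assembling the two tails then yields convergence in probability to $h_\mu$.
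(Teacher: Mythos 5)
Your proof is correct, and while it uses the same basic ingredients as the paper — the union bound plus Shannon--McMillan--Breiman for the lower tail, and for the upper tail the identity $\mathbb{P}(\tau_{A_n(z)}\ge M)=F_z^n\bigl(M\mu(A_n(z))\bigr)$ combined with monotonicity of $F_z^n$ in $t$, SMB to force $M\mu(A_n(z))\to\infty$, and the decay $F_z(t)\to0$ — the architecture is genuinely different. The paper routes the upper tail through an intermediate statement, Lemma~\ref{entimeupper4}: it decomposes the product-measure event over $n$-cylinders, splits off the SMB-atypical cylinders (the complement of ${\cal S}_n$) and the cylinders on which $F_B(e^{n\epsilon})$ is not small, pays $\delta/3$ for each exceptional piece, and then verifies the lemma's hypothesis (convergence in measure of $z\mapsto F_z^n(e^{n\epsilon})$ to $0$) from the theorem's hypothesis via the sets $V_K$ and $U_n$. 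You instead fix $z$ in a single full-measure set, show that the conditional probability in $x$ of the upper-tail event tends to $0$ pointwise (taking the limit in $n$ before letting $T\to\infty$, which is legitimate exactly because of the monotonicity you invoke), and then integrate by dominated convergence; your lower tail is likewise handled pointwise in $z$ plus dominated convergence rather than via Borel--Cantelli. Your version is shorter and avoids the threefold bookkeeping; what the paper's organization buys is a standalone sufficient condition (Lemma~\ref{entimeupper4}) that only requires $F_z^n(e^{n\epsilon})\to 0$ in measure rather than a.e.\ existence of a limiting distribution, and a lower bound (Lemma~\ref{entimelower}) proved almost surely rather than merely in probability, which is what gets reused in Theorem~\ref{main2}.
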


\begin{theorem} \label{main2}
Suppose $\mu$ is a $T$-invariant ergodic probability measure on $\Omega$, and for all small enough $\epsilon > 0$ we have
$$
\sum_{n=1}^{\infty} \int_{\Omega} F_z^n(e^{n\epsilon}) \, d \mu(z) < \infty.
$$
Then
$$
\lim_{n \rightarrow \infty} \frac{1}{n}\log \tau_{A_n(z)}(x) = h_{\mu}
$$
for $\mu \times \mu$-almost every $(x,z) \in \Omega\times \Omega$.
\end{theorem}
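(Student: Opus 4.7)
My plan is to treat the upper and lower bounds for $\frac{1}{n}\log\tau_{A_n(z)}(x)$ separately, with the summability hypothesis feeding only the upper bound (via Borel--Cantelli on $\Omega\times\Omega$), while the lower bound uses $T$-invariance and a simple union bound. The Shannon--McMillan--Breiman theorem~(\ref{SMB}), applied at the point $z$, is used in both halves to convert factors of $\mu(A_n(z))$ to the exponential scale $e^{-nh_\mu}$.

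For the upper bound I would fix a small $\epsilon>0$ for which the hypothesis holds and set
$$C_n^\epsilon=\{(x,z)\in\Omega\times\Omega : \tau_{A_n(z)}(x)\geq e^{n\epsilon}/\mu(A_n(z))\}.$$
Fubini combined with the definition of $F_z^n$ gives $(\mu\times\mu)(C_n^\epsilon)=\int_\Omega F_z^n(e^{n\epsilon})\,d\mu(z)$, which is summable in $n$ by assumption. The Borel--Cantelli lemma on the product space then forces, for $\mu\times\mu$-a.e.\ $(x,z)$, $\tau_{A_n(z)}(x)<e^{n\epsilon}/\mu(A_n(z))$ for all large $n$. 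Dividing by $n$ and invoking SMB at $z$ yields $\limsup_n \frac{1}{n}\log\tau_{A_n(z)}(x)\leq h_\mu+\epsilon$. Intersecting the resulting full-measure sets over a sequence $\epsilon_k\searrow 0$ of admissible $\epsilon$'s gives the upper bound $\limsup\leq h_\mu$ almost surely.

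For the matching lower bound I would use the elementary fact that, by $T$-invariance and a union bound, $\mu\{x:\tau_A(x)\leq N\}\leq N\mu(A)$ for any measurable $A$ and any $N\in\mathbb{N}$. Taking $A=A_n(z)$ and $N=\lfloor e^{n(h_\mu-\epsilon)}\rfloor$, SMB provides a full-measure set of $z$ on which $\mu(A_n(z))\leq e^{-n(h_\mu-\epsilon/2)}$ for all sufficiently large $n$; then $\mu\{x:\tau_{A_n(z)}(x)\leq e^{n(h_\mu-\epsilon)}\}\leq e^{-n\epsilon/2}$, which is summable in $n$. A second Borel--Cantelli (in $x$, at fixed $z$) forces $\liminf_n\frac{1}{n}\log\tau_{A_n(z)}(x)\geq h_\mu-\epsilon$ for $\mu$-a.e.\ $x$; Fubini lifts this to a $\mu\times\mu$-full set, and letting $\epsilon_k\searrow 0$ completes the matching lower bound.

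The main delicacy is coordinating the two scales: the hypothesis controls tails at $t=e^{n\epsilon}$, which after multiplying by $1/\mu(A_n(z))$ matches the target entrance-time scale $e^{n(h_\mu+\epsilon)}$ only because SMB holds. Consequently I must manage several exceptional sets (one from SMB applied to $z$, one from each Borel--Cantelli argument) by taking countable unions over $\epsilon_k=1/k$, and use Fubini so that the almost-sure upper and lower bounds are realized on the \emph{same} $\mu\times\mu$-full set.
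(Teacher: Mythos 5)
Your proposal is correct and follows essentially the same route as the paper: the lower bound is exactly the paper's Lemma~\ref{entimelower} (union bound over $T^{-j}A_n(z)$, SMB at $z$, Borel--Cantelli in $x$, then Fubini), and the upper bound is the paper's proof of Theorem~\ref{main2} in lightly repackaged form, since your identity $(\mu\times\mu)(C_n^\epsilon)=\int_\Omega F_z^n(e^{n\epsilon})\,d\mu(z)$ feeds the same product-space Borel--Cantelli argument, with SMB applied at the end to convert the relative scale $e^{n\epsilon}/\mu(A_n(z))$ to $e^{n(h_\mu+\epsilon)}$ instead of the paper's device of fixing $b>h_\mu$ and restricting to the cylinders $\mathcal{S}_n$ of not-too-small measure. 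The only difference is this minor streamlining; no gap.
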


\noindent{\bf Remark.}  (i) Let us note that summability condition of Theorem~\ref{main2} is only required
 to get the upper bound on the limit. By Lemma~\ref{entimelower} we get the lower bound on the
 limit almost surely for all ergodic measures. \\
 (ii) Although the recurrence time $\tau_n(x) = \tau_{A_n(x)}(x)$ is a special case of the return time,
  Theorem \ref{main2} does not imply the asymptotic formula in~(\ref{OW1}) since the above convergence is
  true for $\mu\times\mu$-almost every $(x,z)$ which that does not imply that it applies to points on the
  diagonal $x=z$ as the  diagonal has measure $0$ in the product measure.

The remainder of the paper looks at a situation in which the hypothesis of Theorem~\ref{main2} is
satisfied. We consider systems with some mixing property.

\begin{definition} We say an invariant measure $\mu$ is {\em $\phi$-mixing} if
there exists a decreasing function $\phi : \mathbb{N} \rightarrow \mathbb{R}$ so that
\begin{equation} \label{mixing}
\frac{| \mu (A \cap T^{-(n+i)}(B) ) - \mu (A) \mu (B)|}{\mu (A)} \leq \phi (i)
\end{equation}
for all $A \in {\cal A}^n$, all $B \in \sigma({\cal A}^*)$, where ${\cal A}^*=\cup_{n=1}^{\infty} {\cal A}^n$  and for all $n \in \mathbb{N}$.
\end{definition}

\noindent In the following two theorems will moreover assume that $\phi$ is summable, that is
 $\sum_{i=1}^{\infty} \phi(i) < \infty$. Let us note that the limiting entrance times distribution $F_z(t)$
 for $\phi$-mixing measures (with summable $\phi$) is exponential almost
 everywhere~\cite{Abadi04}, i.e.\ $F_z(t)=e^{-t}$ for $\mu$-almost
 every $z\in\Omega$. This includes in particular measures of maximal entropy and equilibrium states
 for H\"older continuous potential on Axiom~A systems which are $\psi$-mixing at an exponential
 rate.

\begin{theorem} \label{bridge}
Suppose $\mu$ is a $T$-invariant $\phi$-mixing measure of $\Omega$ with summable $\phi$.
 Then
$$
\lim_{n \rightarrow \infty} \frac{1}{n}\log \tau_{A_n(z)}(x) = h_{\mu}
$$
for $\mu \times \mu$-almost every $(x,z) \in \Omega  \times \Omega$.
\end{theorem}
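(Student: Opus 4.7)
The plan is to deduce Theorem~\ref{bridge} from Theorem~\ref{main2} by verifying its summability hypothesis
\[
\sum_{n=1}^{\infty} \int_{\Omega} F_z^n(e^{n\epsilon})\,d\mu(z)<\infty \qquad (\star)
\]
for all sufficiently small $\epsilon > 0$. Once $(\star)$ is in hand, Theorem~\ref{main2} immediately yields the claimed $\mu\times\mu$-a.e.\ convergence, so the entire argument reduces to producing effective tail bounds on $F_z^n$ from $\phi$-mixing with summable $\phi$.

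I would first establish a block-decomposition estimate: for $B = A_n(z)$ and any integer $g \geq 1$,
\[
\mu(\tau_B > k(n+g)) \leq \bigl(1 - \mu(B) + \phi(g)\bigr)^k.
\]
This is proved by induction on $k$. The event $\{\tau_B > k(n+g)\}$ is contained in $C_k := \bigcap_{j=1}^{k} T^{-j(n+g)} B^c$, and $C_{k-1}$ is a disjoint union of cylinders in ${\cal A}^{(k-1)(n+g)+n}$ at time-gap exactly $g$ from $T^{-k(n+g)} B^c$; applying the $\phi$-mixing inequality cylinder-by-cylinder and summing yields the recursion $\mu(C_k) \leq \mu(C_{k-1})(1 - \mu(B) + \phi(g))$. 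Since $\phi$ is summable we have $\phi(g) \to 0$, so for $\mu$-a.e.\ $z$ one can choose $g = g(n,z)$ minimal with $\phi(g) \leq \mu(A_n(z))/2$; combined with $1-x \leq e^{-x}$, the block estimate becomes the uniform tail bound
\[
F_z^n(t) \leq 2\exp\!\Bigl(-\frac{t}{2(n + g(n,z))}\Bigr), \qquad t \geq 1.
\]

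To verify $(\star)$ I would split $\Omega$ using Shannon--McMillan--Breiman into a ``typical'' set $G_n = \{z : \mu(A_n(z)) \geq e^{-n(h_\mu + \delta)}\}$, on which $g(n,z)$ is controlled by the decay rate of $\phi$, and its complement, whose measure vanishes as $n \to \infty$. On $G_n$ the tail bound above makes $F_z^n(e^{n\epsilon})$ doubly exponentially small in $n$ and yields a summable contribution to $(\star)$; on $\Omega\setminus G_n$ one uses $F_z^n \leq 1$ together with the large-deviation refinement of SMB available for $\phi$-mixing systems. The main obstacle, and the point where the full strength of summability of $\phi$ (rather than just $\phi \to 0$) enters, is to quantify the growth of $g(n,z)$ and the exponential smallness of $\mu(\Omega\setminus G_n)$ finely enough to ensure $(\star)$ holds for $\epsilon$ arbitrarily close to $0$. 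Once $(\star)$ is established, Theorem~\ref{main2} immediately yields the almost sure convergence $\frac{1}{n}\log\tau_{A_n(z)}(x) \to h_\mu$ on $\Omega\times \Omega$.
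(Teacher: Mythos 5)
Your overall strategy — deduce Theorem~\ref{bridge} from Theorem~\ref{main2} by verifying the summability of $\int_\Omega F_z^n(e^{n\epsilon})\,d\mu(z)$ — is exactly the paper's reduction, but the tail estimate you propose is too weak to carry it out, and this is a genuine gap rather than a technicality. Your block decomposition is correct as far as it goes: iterating the $\phi$-mixing inequality over blocks of length $n+g$ does give $\mu(\tau_B>k(n+g))\leq(1-\mu(B)+\phi(g))^k$, and hence $F_z^n(t)\lesssim\exp\bigl(-t/(2(n+g(n,z)))\bigr)$ once $\phi(g)\leq\mu(B)/2$. The problem is the size of $g(n,z)$. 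Since $\mu(A_n(z))$ decays exponentially in $n$ while $\phi$ is only assumed summable, the minimal admissible gap is $g(n,z)\approx\phi^{-1}\bigl(\mu(A_n(z))/2\bigr)$, which is typically exponentially large in $n$: for instance if $\phi(i)=i^{-p}$ with $p>1$ (summable), then $g(n,z)\approx e^{nh_\mu/p}$, and your bound at $t=e^{n\epsilon}$ becomes $\exp\bigl(-e^{n\epsilon}/(2e^{nh_\mu/p})\bigr)$, which tends to $1$ whenever $\epsilon<h_\mu/p$. But the hypothesis of Theorem~\ref{main2} must hold for arbitrarily small $\epsilon>0$ (its proof takes $b\downarrow h_\mu$ and needs $\epsilon<b-h_\mu$), so your estimate only covers $\epsilon$ above a positive threshold depending on the decay rate of $\phi$ and cannot prove the theorem for general summable $\phi$. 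Summability of $\phi$ does not rescue the growth of $g(n,z)$; this is precisely where the naive per-block comparison, which requires $\phi(g)\ll\mu(B)$, breaks down. The auxiliary step you invoke for $\Omega\setminus G_n$ (a large-deviation refinement of Shannon--McMillan--Breiman) is also not available off the shelf in this generality (countable partition, no tail assumption in Theorem~\ref{bridge}), though that is a secondary issue.

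What is needed — and what the paper uses — is a hitting-time bound in which the exponential rate is uniform in the cylinder and the mixing error enters additively rather than per block: Abadi's inequality (Lemma~\ref{abadi}), $\mathbb{P}\bigl(\tau_A>t/\mu(A)\bigr)\leq e^{-Mt}+K\bigl(n\mu(A)+\phi(n)\bigr)$ with constants $M,K$ independent of $A\in\mathcal{A}^n$ and $n$. Proving such a bound requires more than cylinder-by-cylinder mixing over long gaps; the essential missing ingredient in your argument is a uniform estimate of the form $\mathbb{P}\bigl(\tau_B>c/\mu(B)\bigr)\leq1-\delta_0$ at the natural time scale $1/\mu(B)$, which is where the summability of $\phi$ is really used and which your proposal does not supply. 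Once Abadi's bound is available, the verification of the hypothesis of Theorem~\ref{main2} is immediate: with $t=e^{n\epsilon}$ the main term is doubly exponentially small, $\mu(A_n(z))\leq e^{-rn}$ uniformly for $\phi$-mixing measures, and $\sum_n\phi(n)<\infty$, so the integrated tails are summable for every $\epsilon>0$ — no splitting of $\Omega$ via SMB is needed at all. So either invoke Lemma~\ref{abadi} (as the paper does) or prove an estimate of comparable strength; the block argument alone cannot close the proof.
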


\noindent For the final result we will also require that the (countably infinite) partition
 ${\cal A} = \{{\cal P}_i\}_{i=1}^{\infty}$ has an {\em exponentially decaying tail} if
\begin{equation}\label{exponential_tail}
\mu \left(\bigcup_{i \geq j} {\cal P}_i \right) = { \cal O} (\delta ^j)
\end{equation}
for all $j$ and for some $\delta < 1$. If $|{\cal A}|$ is finite then (\ref{exponential_tail}) is trivially satisfied.

\noindent  For $s>0$, put
$$
Z_n(s) = \sum_{A_n \in {\cal A}^n} \mu (A_n)^{1+s}
$$
and define the {\em R\'enyi Entropy Function}~\cite{RE} on $(0,\infty)$ by
$$
R(s) = \lim_{n \rightarrow \infty} \frac{1}{sn}| \log Z_n(s)|
$$
if the limit exists. For larger values of $s$, the
R\'enyi entropy is weighted towards  highest probability events. Moreover for the value $s=0$,
the R\'enyi entropy typically coincides with the Shannon entropy. The R\'enyi entropy exists as a uniform limit in weakly $\psi$-mixing systems \cite{HV} and a pointwise limit under weaker assumption \cite{Ko}.

\begin{theorem}\label{main3}
Suppose $T:\Omega \to \Omega$ is measurable, $\mu$ is $T$ invariant and $\phi$-mixing with summable $\phi$, and $\mathcal{A}$ has exponential tails. Suppose the R\'enyi entropy $R(s)$ exists for $s>0$. Then for $\mu\times \mu$ every $(x, z)\in \Omega\times \Omega$,
\[
\lim_{n\to\infty} \frac{1}{n} \log \sum_{i = 1}^{\tau_{A_n (z)}(x)} \mu(A_n(T^i(x)))^s = h_\mu - s R(s).
\]

\end{theorem}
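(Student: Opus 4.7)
The plan is to combine the entrance-time asymptotics from Theorem~\ref{bridge} with a concentration estimate for the ``Birkhoff-type'' sum $B_n^x(N) := \sum_{i=1}^{N} f_n(T^i x)$, where $f_n(y) := \mu(A_n(y))^s$. Note that by $T$-invariance, $\int f_n\,d\mu = \sum_{A \in \mathcal{A}^n}\mu(A)^{1+s} = Z_n(s)$, and by hypothesis $\frac{1}{n}\log Z_n(s) \to -sR(s)$. Theorem~\ref{bridge} guarantees that for $\mu \times \mu$-a.e.\ $(x,z)$ and every $\epsilon>0$, the entrance time $\tau_{A_n(z)}(x)$ lies in $[M_n, N_n]$ for all large $n$, with $M_n := \lfloor e^{n(h_\mu - \epsilon)}\rfloor$ and $N_n := \lceil e^{n(h_\mu + \epsilon)}\rceil$. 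Consequently, writing $S_n(x,z)$ for the sum in the theorem statement, we get the sandwich $B_n^x(M_n) \leq S_n(x,z) \leq B_n^x(N_n)$ eventually a.s., and it suffices to show that $\frac{1}{n}\log B_n^x(N)$ approximates $\frac{1}{n}\log(N\cdot Z_n(s))$ up to $o(1)$ a.s.\ when $N\in\{M_n, N_n\}$.

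The upper bound is immediate from Markov's inequality applied to the nonnegative variable $B_n^x(N_n)$ with mean $N_n Z_n(s)$: the estimate $\mathbb{P}(B_n^x(N_n) > N_n Z_n(s) e^{n\epsilon}) \leq e^{-n\epsilon}$ is summable in $n$, so by Borel--Cantelli $B_n^x(N_n) \leq N_n Z_n(s) e^{n\epsilon}$ eventually a.s., which together with the sandwich yields $\limsup_n \frac{1}{n}\log S_n(x,z) \leq h_\mu - sR(s) + 2\epsilon$.

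For the lower bound I would run a Chebyshev-type second-moment argument based on $\phi$-mixing. Using $T$-invariance,
\begin{equation*}
\Var(B_n^x(M_n)) = M_n\,\Var(f_n) + 2\sum_{k=1}^{M_n-1}(M_n - k)\,\text{Cov}(f_n, f_n\circ T^k),
\end{equation*}
and I would split the lag sum into short lags ($k\leq n$, where the two copies of $f_n$ cannot yet be decoupled, and one bounds $|\text{Cov}| \leq \mathbb{E}[f_n^2] = Z_n(2s)$) and long lags ($k>n$, where $\phi$-mixing gives $|\text{Cov}(f_n, f_n\circ T^k)| \leq Z_n(s)\,\Phi_n(s)\,\phi(k-n)$ with $\Phi_n(s) := \sum_{A\in\mathcal{A}^n}\mu(A)^s$). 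Summability of $\phi$ combined with the exponential-tail condition on $\mathcal{A}$ (used to control $\Phi_n(s)$) should then give $\Var(B_n^x(M_n))/(M_n Z_n(s))^2 = O(e^{-cn})$ for some $c=c(\epsilon,s)>0$. Chebyshev and Borel--Cantelli then furnish $B_n^x(M_n) \geq \tfrac12 M_n Z_n(s)$ eventually a.s., yielding $\liminf_n \frac{1}{n}\log S_n(x,z) \geq h_\mu - sR(s) - 2\epsilon$. Taking $\epsilon$ through a countable sequence tending to $0$ completes the proof.

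The main obstacle is the variance estimate. Because $f_n$ changes with $n$, classical ergodic-theoretic variance bounds do not apply directly, and the R\'enyi-type exponential rates of the auxiliary sums $Z_n(2s)$ and $\Phi_n(s)$ must be matched carefully against the entrance-time rate $h_\mu-\epsilon$. Concretely, the near-diagonal contribution $\sim n M_n Z_n(2s)$ must be exponentially smaller than $(M_n Z_n(s))^2$, which reduces to a rate inequality roughly of the form $h_\mu - \epsilon + 2sR(s) > 2sR(2s)$; an analogous inequality is needed for the long-range contribution carrying $\Phi_n(s)$. The existence of $R(s)$ for $s>0$, convexity properties of the pressure-like function $s\mapsto sR(s)$, and the exponential-tail hypothesis on $\mathcal{A}$ are precisely what is needed to push these rate comparisons through, and verifying them carefully is the substantive technical content of the argument.
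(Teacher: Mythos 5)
Your upper bound is fine and is essentially the paper's: the paper invokes the proposition of Deschamps--Schmitt--Urbanski--Zdunik (see also Ko), which is proved by exactly this Markov-plus-Borel--Cantelli argument, combined with $\tau_{A_n(z)}(x)\le e^{n(h_\mu+\epsilon)}$ from Theorem~\ref{bridge}. The lower bound, however, has a genuine gap: the second-moment estimate you propose for $B_n^x(M_n)$ does not close under the stated hypotheses. First, your near-diagonal rate inequality is written with the wrong sign: since $Z_n(2s)\ge Z_n(s)^2$ (Jensen), what is actually needed is $2s\bigl(R(s)-R(2s)\bigr)<h_\mu-\epsilon$, not $2sR(2s)<h_\mu-\epsilon+2sR(s)$; the latter is trivially true, the former is a nontrivial constraint which does not follow from convexity alone and which you have not verified. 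Second, and fatally, the long-lag term: under $\phi$-mixing as defined in the paper the best available bound is $|\mathrm{Cov}(f_n,f_n\circ T^k)|\le Z_n(s)\,\Phi_n(s)\,\phi(k-n)$ with $\Phi_n(s)=\sum_{A\in\mathcal{A}^n}\mu(A)^s$, so the long-lag contribution to the variance is of order $M_n Z_n(s)\Phi_n(s)\sum_j\phi(j)$, and Chebyshev requires $\Phi_n(s)\lesssim e^{-cn}M_nZ_n(s)$, i.e.\ $\lim_n\frac1n\log\Phi_n(s)+sR(s)<h_\mu-\epsilon$. This fails in completely standard examples covered by the theorem: for the uniform Bernoulli measure one has equality ($\Phi_n(s)=e^{n(1-s)h_\mu}$, $Z_n(s)=e^{-nsh_\mu}$, so your ratio is $e^{n\epsilon}$), and for the $(0.4,0.6)$-Bernoulli measure with $s=0.1$ one computes $\log(0.4^{0.1}+0.6^{0.1})-\log(0.4^{1.1}+0.6^{1.1})\approx 0.689>h_\mu\approx 0.673$; the same obstruction appears for any measure whose $n$-cylinder measures are uniformly comparable to $e^{-nh_\mu}$ (e.g.\ Parry measures), where the theorem is not trivial. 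So the variance-to-squared-mean ratio produced by your bound grows exponentially, and Chebyshev--Borel--Cantelli applied to the raw sum $B_n^x(M_n)$ cannot deliver the lower bound with only a $\phi$-mixing (as opposed to $\psi$-mixing) covariance input.

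This is precisely why the paper does not take this route for the lower bound: it replaces second moments of the weighted sum by hitting numbers $N_{U,M}$ of sets $U\in\sigma(\mathcal{A}^n)$ of not-too-small measure, for which the $\phi$-mixing variance bound $\Var(N_{U,M})\le c_1Mn\mu(U)$ is effective (Lemmas~\ref{lowerbound} and~\ref{upperbound}), and then reconstructs the weighted sum via an Egoroff set on which Shannon--McMillan--Breiman holds uniformly, intermediate cylinder lengths $[cn]$ and $\tilde{\gamma}_n=n-[n^\alpha]$, and the splitting into $\tilde{E}_n^{\pm}$ carried out in Lemmas~\ref{part1} and~\ref{part2}; the exponential-tail hypothesis enters there (through the sets $F_{k_n,\Delta}$ and the set ${\cal G}_n$), not in a moment computation for $f_n$. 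To salvage your scheme you would need either the stronger $\psi$-mixing bound $|\mu(A\cap T^{-k}B)-\mu(A)\mu(B)|\le\psi(k-n)\mu(A)\mu(B)$, or some grouping of cylinders into sets of non-exponentially-small measure before taking variances --- which is, in effect, what the paper's hitting-number argument does.
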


This generalises a previous result of Ko~\cite{Ko} in which $z$ was assumed to be equal to $x$.
Obviously, (\ref{mixing}) ensures the ergodicity of $\mu$. Furthermore, (\ref{mixing}) implies
the exponential decay of cylinders and this ensures that the metric entropy $h_{\mu}$ is positive.
The summability of $\phi$ is needed to estimate the variance of the hitting time function (see Section \ref{hitnum}).
The condition (\ref{exponential_tail}) in particular implies that $h_{\mu}$ is finite (See Lemma 4 of \cite{Ko}).
It also allows us to control the ``tail" of the partition ${\cal A}^n$
 in the proof of Lemma \ref{part2}. From now on we will abbreviate  $\tau_{A_n(z)}(x)$ by $\tau_n^z(x) $ for convenience.


\section{Convergence of Entrance Time}\label{proof.entrance.time}

We first prove the lower bound of Theorem \ref{main1} and \ref{main2}.
\begin{lemma} \label{entimelower}
Suppose $\mu$ is a $T$-invariant ergodic probability measure of $\Omega$. Then
$$
\liminf_{n \rightarrow \infty} \frac{1}{n}\log \tau_n^z(x)  \geq h_{\mu}
$$
for $\mu \times \mu$-almost every $(x,z) \in \Omega \times \Omega$.
\end{lemma}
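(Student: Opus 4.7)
The plan is a standard Markov inequality plus Borel--Cantelli argument, but carried out fiberwise in $z$ rather than uniformly, so as not to lose anything when we combine it with Shannon--McMillan--Breiman.

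First I would obtain the basic counting inequality. For any fixed $z$ and any positive integer $K$, the event $\{x:\tau_{A_n(z)}(x)\le K\}$ is contained in $\bigcup_{i=1}^{K}T^{-i}A_n(z)$, so by $T$-invariance of $\mu$ the union bound gives
\begin{equation*}
\mu\bigl(\{x:\tau_{A_n(z)}(x)\le K\}\bigr)\le K\,\mu(A_n(z)).
\end{equation*}
Setting $K=K_n:=e^{n(h_\mu-\varepsilon)}$ for a fixed small $\varepsilon>0$, this bound becomes $e^{n(h_\mu-\varepsilon)}\mu(A_n(z))$.

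Next I would feed in Shannon--McMillan--Breiman. By (\ref{SMB}), there is a full-measure set $\Omega_0\subset\Omega$ such that for every $z\in\Omega_0$ there exists $N(z)$ with $\mu(A_n(z))\le e^{-n(h_\mu-\varepsilon/2)}$ for all $n\ge N(z)$. For such $z$,
\begin{equation*}
\sum_{n\ge N(z)}\mu\bigl(\{x:\tau_{A_n(z)}(x)\le K_n\}\bigr)\le\sum_{n\ge N(z)}e^{-n\varepsilon/2}<\infty.
\end{equation*}
Borel--Cantelli applied in the variable $x$ (with $z\in\Omega_0$ fixed) then yields that for $\mu$-a.e. $x$ one has $\tau_{A_n(z)}(x)>e^{n(h_\mu-\varepsilon)}$ for all sufficiently large $n$, hence $\liminf_{n\to\infty}\frac1n\log\tau_n^z(x)\ge h_\mu-\varepsilon$.

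Finally I would pass from ``$z$ fixed, a.e.\ $x$'' to ``$\mu\times\mu$-a.e.\ $(x,z)$'' by Fubini: the set $E_\varepsilon$ of pairs $(x,z)$ on which the last inequality holds contains $\Omega_0\times\Omega$ modulo a $z$-null set times all of $\Omega$ plus $\mu$-null fibers, so $\mu\times\mu(E_\varepsilon)=1$. Intersecting over a sequence $\varepsilon_k\downarrow 0$ gives $\liminf\frac1n\log\tau_n^z(x)\ge h_\mu$ for $\mu\times\mu$-a.e.\ $(x,z)$, as required. The only subtlety is that the Borel--Cantelli step is done pointwise in $z$ (so the threshold $N(z)$ may be unbounded), which is why a direct integration of the bound $K_n\sum_A\mu(A)^2$ would be wasteful; doing it fiberwise avoids any quantitative control on how fast the SMB convergence is uniform, and this is the step I would want to be most careful about.
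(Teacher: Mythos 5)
Your argument is correct and follows essentially the same route as the paper: a union bound $\mu(\{x:\tau_{A_n(z)}(x)\le e^{n(h_\mu-\varepsilon)}\})\le e^{n(h_\mu-\varepsilon)}\mu(A_n(z))$, the Shannon--McMillan--Breiman bound on $\mu(A_n(z))$ for a.e.\ $z$, Borel--Cantelli in $x$ with $z$ fixed, and then Fubini to pass to $\mu\times\mu$-a.e.\ $(x,z)$. The only difference is cosmetic: the paper fixes two levels $0<b<c<h_\mu$ where you use $h_\mu-\varepsilon$ and $h_\mu-\varepsilon/2$, and you spell out the Fubini and $\varepsilon_k\downarrow0$ steps that the paper leaves implicit.
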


\begin{proof}
 Let $0< b < c < h_{\mu}$, and put
$$
\mathbb{E}_n = \{x : \tau_n^z(x)   \leq e^{bn} \}.
$$
Note that $\mathbb{E}_n = \bigcup_{j = 1}^{[e^{bn}]} T^{-j} (A_n(z))$. Then we have,
$$
\mu(\mathbb{E}_n) \leq \sum_{j=1}^{[e^{bn}]} \mu(T^{-j}(A_n(z))) = \mu(A_n(z)) e^{bn}.
$$
By (\ref{SMB}), $\mu(A_n(z)) \leq e^{{\color{red} -}nc}$ for almost every $z$. Therefore, $\mu(\mathbb{E}_n) \leq e^{-(c-b)n}$, summable on $n$. By the Borel Cantelli Lemma, for almost every $z$, $\mu( \limsup \mathbb{E}_n) = 0$. In words, this implies that for almost every $z$, the set of initial conditions $x$ for which the return times to $A_n(z)$ are smaller than $e^{bn}$ infinitely often have $\mu$ measure 0.
This implies
$$
\liminf_{n \rightarrow \infty} \frac{1}{n}\log \tau_n^z(x)   \geq h_{\mu}
$$
for almost every $x$.
\end{proof}

\noindent{\bf Remark.} Note that in the proof above we showed that for any $\varepsilon > 0$ and almost every $z$,
$$
\lim_{n \rightarrow \infty} \mu \left( \left \{x : \frac{1}{n} \log \tau_n^z(x)   \leq h_{\mu}- \varepsilon \right \} \right) = 0,
$$
which is equivalent to
$$
\lim_{n \rightarrow \infty} \mu \times \mu \left( \left \{(x,z) : \frac{1}{n} \log \tau_n^z(x)   \leq h_{\mu}- \varepsilon \right \} \right) = 0.
$$
To complete the proof of Theorem \ref{main1} and \ref{main2}, we obtain the other side of the inequality in Lemma \ref{entimelower} under certain assumptions. One might have attempted to show this by only assuming that the measure $\mu$ is $T$-invariant and ergodic. However, Shields \cite{PS} constructed an example of a dynamical system (on a four-element subshift) in which
$$
\limsup_{n \rightarrow \infty}  \frac{1}{n} \log \tau_n^z(x)   = \infty
$$
for $\mu \times \mu$-almost every $(x,z) \in \Omega \times \Omega$.

\begin{lemma} \label{entimeupper4}
Suppose for all small enough $\epsilon > 0$ and $\delta>0$,
$$
\lim_{n \rightarrow \infty} \mu( \left \{ x \in \Omega: F_z^n(e^{n\epsilon}) > \delta \right \}) = 0
$$
for almost every $z \in \Omega$. Then $\frac{1}{n} \log \tau_n^z(x)  $ converges to $h_{\mu}$ in probability as $n \to \infty$.
\end{lemma}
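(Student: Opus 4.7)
The plan is to combine the lower bound already proved in Lemma~\ref{entimelower} with a matching upper bound in $\mu\times\mu$-probability. From the remark following Lemma~\ref{entimelower}, the lower bound says that for every $\varepsilon>0$,
$$\mu\times\mu\!\bigl(\bigl\{(x,z):\tfrac{1}{n}\log\tau_n^z(x)\leq h_\mu-\varepsilon\bigr\}\bigr)\longrightarrow 0,$$
so it is enough to establish the complementary upper bound
$$\mu\times\mu\!\bigl(\bigl\{(x,z):\tfrac{1}{n}\log\tau_n^z(x)>h_\mu+\varepsilon\bigr\}\bigr)\longrightarrow 0$$
for each fixed (small) $\varepsilon>0$.

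The key observation is that, setting $t_n(z):=e^{n(h_\mu+\varepsilon)}\mu(A_n(z))$, the definition of $F_z^n$ gives exactly
$$\mu\!\bigl(\{x:\tau_n^z(x)>e^{n(h_\mu+\varepsilon)}\}\bigr)=F_z^n(t_n(z)).$$
By Shannon-McMillan-Breiman applied at $z$, for $\mu$-almost every $z$ we have $\mu(A_n(z))\geq e^{-n(h_\mu+\varepsilon/2)}$ for all sufficiently large $n$, and hence $t_n(z)\geq e^{n\varepsilon/2}$. Since $F_z^n$ is non-increasing in its argument (as $t$ grows, the threshold $t/\mu(A_n(z))$ in the underlying tail probability also grows), one concludes
$$F_z^n(t_n(z))\leq F_z^n(e^{n\varepsilon/2}).$$

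Next, I would invoke the hypothesis of the lemma with $\varepsilon$ replaced by $\varepsilon/2$. Note that $F_z^n(e^{n\varepsilon/2})$ does not depend on $x$, so the set $\{x:F_z^n(e^{n\varepsilon/2})>\delta\}$ is either $\Omega$ or empty; taking $\delta$ arbitrarily small, the hypothesis therefore reduces to the pointwise statement $F_z^n(e^{n\varepsilon/2})\to 0$ for $\mu$-almost every $z$. Combined with the previous step, this gives $F_z^n(t_n(z))\to 0$ almost surely in $z$.

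The last step is to integrate over $z$. Decomposing $\Omega$ into the ``SMB-good'' set $G_n:=\{z:\mu(A_n(z))\geq e^{-n(h_\mu+\varepsilon/2)}\}$ (whose complement has measure tending to $0$) and its complement, and using the trivial bound $F_z^n\leq 1$, the dominated convergence theorem yields
$$\mu\times\mu\!\bigl(\bigl\{(x,z):\tau_n^z(x)>e^{n(h_\mu+\varepsilon)}\bigr\}\bigr)=\int_\Omega F_z^n(t_n(z))\,d\mu(z)\longrightarrow 0,$$
which completes the upper bound. The argument is essentially bookkeeping; the only mildly delicate point is the decomposition of $\Omega$ into $G_n$ and its complement so that the monotonicity inequality can be applied uniformly on $G_n$ before dominated convergence takes over on the remaining small set.
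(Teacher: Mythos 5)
Your argument is correct and rests on the same three ingredients as the paper's proof of Lemma~\ref{entimeupper4} --- the hypothesis on $F_z^n(e^{n\epsilon})$, the Shannon--McMillan--Breiman theorem to bound $\mu(A_n(z))$ from below, and the monotonicity of $F_z^n$ in its argument --- but it packages them differently. You work pointwise in $z$ and integrate, writing $\mu\times\mu\bigl(\{(x,z):\tau_n^z(x)>e^{n(h_\mu+\varepsilon)}\}\bigr)\le\int_\Omega F_z^n(t_n(z))\,d\mu(z)$ and finishing with dominated convergence, whereas the paper discretizes: it writes the product measure as $\sum_{B\in\mathcal{A}^n}\mu(B)F_B(\mu(B)e^{nb})$ and splits the sum over three collections of $n$-cylinders (those missing $\Omega_n=\{z:F_z^n(e^{n\epsilon})\le\delta/3\}$, those of measure below $e^{-n(h_\mu+\delta')}$, and the remaining ``good'' cylinders), obtaining the bound $\delta$ directly. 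Your set $G_n$ plays exactly the role of the paper's $\mathcal{S}_n$, and your use of the hypothesis the role of $\Omega_n$ and $\bar{\Omega}_n$; together with your correct appeal to the remark after Lemma~\ref{entimelower} for the lower bound, the structure matches.

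One caveat. You reduce the hypothesis to the pointwise statement that $F_z^n(e^{n\epsilon})\to0$ for $\mu$-a.e.\ $z$, exploiting the fact that the displayed set is written with $x$ as the dummy variable. That is a defensible literal reading, but the variable is almost certainly a typo for $z$: the paper's own proof of the lemma, and the way the lemma is invoked in the proof of Theorem~\ref{main1} (where only $\mu(\{z:F_z^n(e^{n\epsilon})>\delta\})<\beta$ for large $n$ is established), use the weaker hypothesis that $z\mapsto F_z^n(e^{n\epsilon})$ converges to $0$ in $\mu$-measure, not almost everywhere. Your proof survives under this weaker reading with a one-line change: since $0\le F_z^n\le1$, replace the dominated convergence step by $\int_{G_n}F_z^n(e^{n\varepsilon/2})\,d\mu(z)\le\delta+\mu\bigl(\{z:F_z^n(e^{n\varepsilon/2})>\delta\}\bigr)$ with $\delta>0$ arbitrary (equivalently, invoke dominated convergence for convergence in measure). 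With that adjustment your argument proves the lemma in the form the paper actually needs.
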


\begin{proof}  Let $\delta > 0$, $b > h_{\mu}$ and $\mathbb{D}_n = \{(x,z) \in \Omega \times \Omega : \tau_n^z(x)   > e^{nb} \}$. We want to show that $\mu \times \mu (\mathbb{D}_n)$ is bounded from above by $\delta$ for large enough $n$. Further let $\epsilon \in (0,  b - h_{\mu})$ and $\delta' = b -(h_{\mu} + \epsilon)$. Put
$$
\Omega_n = \{ z \in \Omega : F_z^n(e^{n \epsilon}) \leq \delta /3 \} ,
$$
$$
{\cal S}_n = \{ B \in {\cal A}_n: \mu(B) \geq e^{-n(h_{\mu} + \delta')} \}
$$
and
$$
S_n = \bigcup_{B \in {\cal S}_n} B.
$$
By hypothesis and~(\ref{SMB}), we can choose $n$ large enough so that $\mu(\Omega_n^c) < \delta /3$ and $\mu(S_n^c) < \delta/3$. Put
$$
\bar{\Omega}_n = \{ B \in {\cal A}^n : B \cap \Omega_n \neq \emptyset \}.
$$
As $F_z^n(t)$ is locally constant on $n$-cylinders $B$, $F_B(e^{n \epsilon}) \leq \delta /3$ for $B \in \bar{\Omega}_n$. Note also that $F_B(t)$ decreases as $t$ increases; therefore,  $F_B(\mu(B) e^{nb}) \leq F_B(e^{n\epsilon})$. It follows that for large enough $n$
\begin{eqnarray*}
\mu \times \mu(\mathbb{D}_n) &=& \sum_{B \in {\cal A}^n} \mu(B) \mathbb{P}(\tau_{B} \geq e^{nb})\\
& = & \sum_{B \in {\cal A}^n} \mu(B) F_B(\mu(B) e^{nb})\\
& = & \sum_{B \in {\bar{\Omega}_n^c}} \mu(B) F_B(\mu(B) e^{nb}) + \sum_{B \in \bar{\Omega}_n} \mu(B) F_B(\mu(B) e^{nb}) \\
& < & \mu(\Omega_n^c) + \sum_{B \in {\cal S}_n^c} \mu(B) F_B(\mu(B) e^{nb}) + \sum_{B \in \bar{\Omega}_n \cap {\cal S}_n} \mu(B) F_B(\mu(B) e^{nb}) \\
& < & \frac{\delta}{3} + \mu(S_n^c) + \sum_{B \in \bar{\Omega}_n \cap {\cal S}_n} \mu(B) F_B(e^{n\epsilon}) \\
& < & \frac{2 \delta}{3} + \frac{\delta}{3} \sum_{B \in \bar{\Omega}_n \cap {\cal S}_n} \mu(B) \leq \delta
\end{eqnarray*}
As the above is true for any $b > h_{\mu}$, we showed for any $\varepsilon > 0$
$$
\lim_{n \rightarrow \infty} \mu \times \mu \left( \left \{(x,z) : \frac{1}{n} \log \tau_n^z(x)   \geq h_{\mu} + \varepsilon \right \} \right) = 0.
$$
Together with the remark under Lemma \ref{entimelower}, the proof is completed.

\end{proof}

Similar to the entrance time distribution, for $z \in \Omega$, $n \in \mathbb{N}$
 and $t > 0$ we define the {\em return time distribution} as
$$
\tilde{F}_z^n(t) = \mathbb{P}_{A_n(z)}\left( \tau_{A_n(z)} \geq \frac{t}{\mu(A_n(z))} \right) = \mu \left( \left \{x \in A_n(z) : \tau_n^z(x)   \geq \frac{t}{\mu(A_n(z))} \right \} \right) / \mu(A_n(z))
$$
assuming $\mu(A_n(z)) > 0$ and if $B = A_n(z)$ we put
$$
\tilde{F}_B(t) =\tilde{F}_z^n(t).
$$
By~\cite{HLV} the entrance times distribution $F_B$ and the return times distribution $\tilde{F}_B$
 are related by the identity $F_B(t)=\int_t^\infty\tilde{F}_B(s)\,ds$.
 
 \vspace{3mm}

\noindent{\bf Proof of Theorem \ref{main1}.} Let $\beta$, $\epsilon$ and $\delta$ be positive. By Lemma \ref{entimeupper4}, we want to show that
$$
\lim_{n \rightarrow \infty} \mu( \left \{ x \in \Omega: F_z^n(e^{n\epsilon}) > \delta \right \}) = 0.
$$
Put $V_N = \{z \in \Omega: F_z(N) \leq \delta /2 \}$. Since $F_z(t)$ decreases to 0 by assumption, 
there exists $K=N_{\delta, \beta}$ such that $\mu( V_K^c) < \beta /2$. Put
$U_n = \{z \in \Omega: | F_z^n(K) - F_z(K) |  \leq \delta /2 \}$.  Since $F_z^n$ converges to $F_z$ for almost every $z$, when $n$ is large enough, we have $\mu(U_n^c) < \beta /2$, and $e^{n \epsilon} > K$. For $z \in V_K \cap U_n$, we get
$$
F_z^n(e^{n \epsilon}) \leq F_z^n(K) \leq F_z(K) + \delta /2 < \delta.
$$
This shows for large $n$,
$$
\mu(z \in \Omega: F_z^n(e^{n \epsilon}) > \delta) \leq \mu(V_K) + \mu(U_n) < \beta/2 + \beta/2 = \beta,
$$
and the proof is completed. \qed

\vspace{3mm}

\noindent Now we turn to prove the almost sure convergence of the entrance time.

\vspace{3mm}

\noindent{\bf Proof of Theorem \ref{main2}.} Let $b > h_{\mu}$, $\epsilon \in (0,  b - h_{\mu})$ and $\delta = b -(h_{\mu} + \epsilon)$. We claim that
$$
\limsup_{n \rightarrow \infty} \frac{1}{n}\log \tau_n^z(x)   \leq h_{\mu}
$$
for $\mu \times \mu$-almost every $(x,z) \in \Omega \times \Omega$. Put
$$
{\cal S}_n = \{ B \in {\cal A}_n: \mu(B) \geq e^{-n(h_{\mu} + \delta)} \}
$$
and
$$
S_n = \bigcup_{B \in {\cal S}_n} B.
$$
Then as $F_B(t)$ is decreasing, if we put $\mathbb{D}_n = \{(x,z) \in \Omega \times \Omega : \tau_n^z(x)   > e^{nb} \}$, we have
\begin{eqnarray*}
\mu \times \mu(\mathbb{D}_n \cap (\Omega \times S_n)) & = & \sum_{B \in {\cal S}_n} \mu(B) F_B(\mu(B)e^{nb}) \\
& \leq &  \sum_{B \in {\cal S}_n} \mu(B) F_B(e^{-n(h_{\mu} + \delta)} e^{nb}) \\
& = & \sum_{B \in {\cal S}_n} \mu(B) F_B(e^{n\epsilon}) = \int_{\Omega} F_z^n(e^{n\epsilon}) \, d \mu(z)
\end{eqnarray*}
which is summable by our hypothesis. Applying the Borel Cantelli Lemma and (\ref{SMB}) gives
$$
\mathbb{P}\left(\limsup_{n \rightarrow \infty} \mathbb{D}_n \right) \leq \mathbb{P}\left(\limsup_{n \rightarrow \infty}  (\mathbb{D}_n \cap (\Omega \times S_n))\right) + \mathbb{P}\left(\limsup_{n \rightarrow \infty} (\Omega \times S_n^c) \right) = 0.
$$
As $b>h_\mu$ is arbitrary our claim is proved. Together with Lemma~\ref{entimelower}, we proved
Theorem~\ref{main2}. \qed

\begin{corollary} \label{entimeupper2}
Suppose for almost every $z \in \Omega$, there exists $F_z(t)$, a decreasing function on $t>0$, and a summable sequence $a_n > 0$ such that for all small enough $\epsilon > 0$,
\begin{itemize}
\item[(i)] $\sum_{n=1}^{\infty} \mu(\{ z: |F_z^n (e^{n \epsilon}) - F_z(e^{n \epsilon})| > a_n \}) < \infty$
and
\item[(ii)] $\sum_{n=1}^{\infty} \int_{\Omega} F_z(e^{n\epsilon}) \, d \mu(z) < \infty.$
\end{itemize}
Then
$$
\lim_{n \rightarrow \infty} \frac{1}{n}\log \tau_n^z(x)   = h_{\mu}
$$
for $\mu \times \mu$-almost every $(x,z) \in \Omega \times \Omega$.
\end{corollary}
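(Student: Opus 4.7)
The plan is to deduce the corollary from Theorem~\ref{main2} by verifying its summability hypothesis
$$
\sum_{n=1}^\infty \int_\Omega F_z^n(e^{n\epsilon}) \, d\mu(z) < \infty
$$
for every sufficiently small $\epsilon>0$. Once this is in hand, Theorem~\ref{main2} immediately delivers the almost sure convergence $\frac{1}{n}\log \tau_n^z(x)\to h_\mu$ on a full $\mu\times\mu$-measure set.

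The key manoeuvre is the triangle inequality
$$
F_z^n(e^{n\epsilon}) \leq F_z(e^{n\epsilon}) + \bigl| F_z^n(e^{n\epsilon}) - F_z(e^{n\epsilon})\bigr|,
$$
followed by integration in $z$. Assumption~(ii) handles the first term on the right directly. For the second term I would split $\Omega$ into
$$
E_n = \{z\in\Omega : |F_z^n(e^{n\epsilon}) - F_z(e^{n\epsilon})| > a_n\}
$$
and its complement. On $\Omega\setminus E_n$ the integrand is bounded by $a_n$, so contributes at most $a_n$ to the integral; on $E_n$, I use that both $F_z^n$ and $F_z$ take values in $[0,1]$ (being a probability and the pointwise limit of probabilities), hence the integrand is bounded by $1$ and contributes at most $\mu(E_n)$.

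Summing this over $n$ yields
$$
\sum_{n=1}^\infty \int_\Omega \bigl| F_z^n(e^{n\epsilon}) - F_z(e^{n\epsilon})\bigr|\, d\mu(z) \leq \sum_{n=1}^\infty a_n + \sum_{n=1}^\infty \mu(E_n),
$$
where the first series is finite by the summability of $\{a_n\}$ and the second is finite by hypothesis~(i). Combined with~(ii), this gives the summability hypothesis of Theorem~\ref{main2}, which closes the argument.

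I do not foresee any real obstacle here; the corollary is essentially a packaging of Theorem~\ref{main2} with a convenient sufficient condition. The only point that requires a moment of attention is recognising that $|F_z^n - F_z|\le 1$, which is what makes the decomposition into $E_n$ and its complement effective — without this trivial uniform bound one would need quantitative control of $F_z^n - F_z$ on the exceptional set.
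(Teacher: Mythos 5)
Your proof is correct and follows essentially the same route as the paper: both verify the summability hypothesis of Theorem~\ref{main2} via the triangle inequality and split the integral of $|F_z^n - F_z|$ over the exceptional set (bounded by its measure, since the difference is uniformly bounded) and its complement (bounded by $a_n$). The paper uses the crude bound $2$ instead of your $1$ for the difference, which is immaterial.
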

\noindent{\bf Proof.} In light of Theorem \ref{main2}, it is sufficient to show that our hypothesis implies
$$
\sum_{n=1}^{\infty} \int_{\Omega} F_z^n(e^{n\epsilon}) \, d \mu(z) < \infty
$$
for small enough $\epsilon$. But
\begin{eqnarray*}
& & \int_{\Omega} F_z^n(e^{n\epsilon}) \, d \mu(z)\\
& \leq & \int_{\Omega} |F_z^n(e^{n\epsilon}) - F_z(e^{n\epsilon})| d \mu(z) + \int_{\Omega} F_z(e^{n\epsilon}) \, d \mu(z) \\
& \leq & 2 \mu(\{ z: |F_z^n (e^{n \epsilon}) - F_z(e^{n \epsilon})| > a_n \}) + a_n + \int_{\Omega} F_z(e^{n\epsilon})\,  d \mu(z).
\end{eqnarray*}
The three terms on the right hand side above are all summable by our hypothesis, and we are done.
\qed

\section{Proof of Theorem~\ref{bridge}}\label{proof.bridge}

We shall need the following result of Abadi~\cite[Theorem 1]{Abadi04}). The following is a simplified
version.

\begin{lemma} \label{abadi}
Let $\mu$ be a $\phi$-mixing $T$-invariant probability measure such that
$\phi$ is summable. Then there exist a constants $M>0, K_{\ref{abadi}}<\infty$ such that

$$
\mathbb{P}\left ( \tau_A > \frac{t}{\mu(A)} \right) \leq e^{-Mt} + K_{\ref{abadi}}(n\mu(A) + \phi(n))
$$
for all $A\in\mathcal{A}^n$ and all $n\in\mathbb{N}$.
\end{lemma}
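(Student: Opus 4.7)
The plan is to obtain this exponential hitting-time bound by a standard blocking argument, using the $\phi$-mixing condition to turn a single-window escape estimate into genuine exponential decay.

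\emph{Step 1 (Mixing recursion).} Set $E_\ell = \{\tau_A > \ell\}$. Since the event $\{T^i x \in A\}$ depends only on coordinates $i,\dots,i+n-1$, the event $E_\ell$ is a union of cylinders from $\mathcal{A}^{\ell+n-1}$. Decomposing $E_\ell = \bigsqcup_\alpha C_\alpha$ and applying (\ref{mixing}) to each $C_\alpha$ paired with the shifted event $T^{-(\ell+g)}(E_{\ell'})$ (gap $g$) gives
$$
\mathbb{P}\bigl(E_\ell \cap T^{-(\ell+g)}(E_{\ell'})\bigr) \leq \mathbb{P}(E_\ell)\bigl(\mathbb{P}(E_{\ell'}) + \phi(g-n+1)\bigr).
$$
Because $\{\tau_A > \ell+g+\ell'\} \subseteq E_\ell \cap T^{-(\ell+g)}(E_{\ell'})$, iterating this with $\ell' = \ell$ and gap $g = 2n$ (so the mixing error becomes $\phi(n+1)$) yields, after $k$ blocks,
$$
\mathbb{P}(\tau_A > k\ell + (k-1)\cdot 2n) \leq \bigl(\mathbb{P}(\tau_A > \ell) + \phi(n+1)\bigr)^k.
$$

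\emph{Step 2 (Base-case estimate).} Take $\ell = C/\mu(A)$ with $C$ to be chosen. Estimate $\mathbb{P}(\tau_A \leq \ell) = \mathbb{P}(N\geq 1)$ from below by Chebyshev applied to the visit count $N = \sum_{i=1}^\ell \mathbf{1}_A\circ T^i$: we have $\mathbb{E}N = C$ and
$$
\mathbb{E}N^2 = \ell\mu(A) + 2\sum_{s=1}^{\ell-1}(\ell - s)\,\mu(A \cap T^{-s}A).
$$
Split the inner sum into close pairs $s < n$ and far pairs $s \geq n$. The close pairs are bounded crudely by $n\ell\mu(A) = Cn\mu(A)$, which is the source of the additive $n\mu(A)$ error. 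The far pairs satisfy $\mu(A \cap T^{-s}A) \leq \mu(A)(\mu(A) + \phi(s-n))$ by (\ref{mixing}) and sum to $\leq \ell^2 \mu(A)^2 + \ell\mu(A)\|\phi\|_1$. Choosing $C$ small but fixed yields $\mathbb{P}(\tau_A > \ell) \leq 1 - c$ uniformly in $A \in \mathcal{A}^n$, up to an additive correction of order $n\mu(A) + \phi(n)$.

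\emph{Step 3 (Assembly).} Set $a = \mathbb{P}(\tau_A > \ell)$ and $\epsilon = \phi(n+1)$. Step 1 gives $\mathbb{P}(\tau_A > k\ell + 2n(k-1)) \leq (a+\epsilon)^k$. For a target time $t/\mu(A)$, choose $k$ with $k\ell + 2n(k-1) \leq t/\mu(A) < (k+1)\ell + 2nk$, so that $k$ is of order $t/(C + 2n\mu(A))$. When $n\mu(A) + \phi(n)$ is sufficiently small, Step 2 gives $a + \epsilon \leq 1 - c/2 < 1$, so $(a+\epsilon)^k \leq e^{-Mt}$ for some fixed $M > 0$. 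In the complementary regime, when $n\mu(A) + \phi(n)$ is bounded away from $0$, the asserted bound holds trivially through the additive term $K_{\ref{abadi}}(n\mu(A) + \phi(n))$; the short uncovered initial window of length $< \ell + 2n$ only contributes further to that additive error.

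\emph{Main obstacle.} The subtle point is the base case in Step 2 when $A$ is near-periodic: the self-intersections $\mu(A \cap T^{-s}A)$ for small $s$ can be as large as $\mu(A)$, and summing these over $s < n$ naively costs up to $n\mu(A)$ in the second moment. This is exactly why $n\mu(A)$ appears additively, not multiplicatively, in the final bound. One must isolate this ``resonant'' contribution from the main term and then verify that, after reallocating it to the error, the non-resonant part of the second moment is controlled with a universal constant, so that the base escape probability $c$ stays uniformly bounded away from $0$ across all $A \in \mathcal{A}^n$. This uniformity is what produces the single constants $M$ and $K_{\ref{abadi}}$ in the statement.
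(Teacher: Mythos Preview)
The paper does not prove this lemma: it is quoted as a simplified version of \cite[Theorem~1]{Abadi04} and invoked as a black box in the proof of Theorem~\ref{bridge}. Your proposal is therefore a sketch of Abadi's argument rather than something to compare against an in-paper proof.

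Your architecture (block recursion plus a base-window escape estimate) is correct in outline, but Step~2 has a real gap. You claim the close-pair contribution is ``bounded crudely by $n\ell\mu(A)=Cn\mu(A)$''; since $\ell=C/\mu(A)$, in fact $n\ell\mu(A)=nC$, not $Cn\mu(A)$. With the crude bound $\mu(A\cap T^{-s}A)\le\mu(A)$ the short-return pairs therefore add a term of order $nC$ to $\mathbb{E}N^{2}$, and Paley--Zygmund yields only $\mathbb{P}(N\ge1)\gtrsim C/n$. The base escape probability is thus not bounded away from zero uniformly in $n$, so in Step~3 the iterated bound degenerates to roughly $(1-C/n)^{k}$ and no fixed exponential rate $M$ can be extracted. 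Your ``Main obstacle'' paragraph correctly identifies this as the crux, but the proposed fix---reallocating the resonant term to the additive $n\mu(A)$ error---rests on that arithmetic slip: in $\phi$-mixing there is no uniform lower bound on $\mu(A)$ matching the exponential upper bound, so the short-return sum cannot be controlled by a universal multiple of $n\mu(A)$. Abadi's proof handles this through a more delicate analysis involving the first self-return time $\tau(A)=\min\{s\ge1:A\cap T^{-s}A\ne\emptyset\}$ and an adaptive choice of iteration scales; the additive error $n\mu(A)+\phi(n)$ emerges from that finer treatment rather than from a single second-moment computation.
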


\noindent {\bf Proof of Theorem~\ref{bridge}.}
We have to prove that the limit
$$
\lim_{n \rightarrow \infty} \frac{1}{n}\log \tau_n^z(x)   = h_{\mu}
$$
exists for $\mu \times \mu$-almost every $(x,z) \in \Omega \times \Omega$ under the assumption
that $\mu$ is $\phi$-mixing and $\sum_i\phi(i)<\infty$.
By Theorem \ref{main2}, we need to show that
$$
\sum_{n=1}^{\infty} \int_{\Omega} F_z^n(e^{n\epsilon}) \, d \mu(z) < \infty
$$
for any small enough $\epsilon$. It is well known that for a $\phi$-mixing  system, there exists $r > 0$ such that $\mu(A) \leq e^{-rn}$ for all $n$ and $n$-cylinder $A \in {\cal A}^n$. Moreover by Lemma~\ref{abadi}
we have
$$
\mathbb{P}\left ( \tau_{A_n(z)} > \frac{t}{\mu(A_n(z))} \right) \leq e^{-Mt} + K_{\ref{abadi}}(n\mu(A_n(z)) + \phi(n))
$$
for every $z\in\Omega$, $n\in\mathbb{N}$ and $t > 0$.
Then for any $\epsilon > 0$,
\begin{eqnarray*}
\sum_{n=1}^{\infty} \int_{\Omega} F_z^n(e^{n\epsilon}) \, d \mu(z) & = & \int_{\Omega} \sum_{n=1}^{\infty} \mathbb{P} \left( \tau_{A_n(z)} > \frac{e^{n\epsilon}}{\mu(A_n(z))} \right) \, d \mu(z) \\
& \leq & \int_{\Omega} \left \{ e^{-M\exp(n \epsilon)} + K_{\ref{abadi}}\mu(A_n(z))
+ K_{\ref{abadi}} \phi(n) \right \} \, d \mu(z) \\
& \leq & \int_{\Omega} \left \{ e^{-M\exp(n \epsilon)} + K_{\ref{abadi}}e^{-rn}
+ K_{\ref{abadi}} \phi(n) \right \} \, d \mu(z) < \infty \\
\end{eqnarray*}
as required. \qed

\section{Proof of Theorem~\ref{main3}} \label{EMR}

From now on we will assume that the measure $\mu$ satisfies the $\phi$-mixing property with summable $\phi$ and the partition $\cal A$ has an exponentially decaying tail (see (\ref{mixing}) and (\ref{exponential_tail})).
We separately prove the upper and lower bound on the limit. The upper is quite easy but the lower
bound requires a more careful analysis of hitting numbers.
For $z \in \Omega$, define
$$
D^z:= \left \{x: \lim_{n \rightarrow \infty} \frac{1}{n} \log \tau_n^z(x)   = h_{\mu} \right \}
$$
and put
$$
D :=  \{z: \mu(D^z) = 1 \}.
$$
Theorem~\ref{bridge} implies that $\mu(D) = 1$. For $z \in D$,  $\epsilon > 0$ and all $x \in D^z$, we have
\begin{equation}\label{OW}
e^{n(h_{\mu} - \epsilon)} < \tau_n^z(x)   < e^{n(h_{\mu}+ \epsilon)}
\end{equation}
for large enough $n$. In the rest of the paper we assume $z \in D$, and for convenience we put $W_n^s(x,z) = \sum_{i=1}^{\tau_n^z(x)  }\mu (A_n(T^i(x)))^s$. Note that $W_n^0 (x,x) = \tau_n(x)$, and in this case Theorem \ref{main3} coincides with (\ref{OW1}). Also the case $x=z$ and $s>0$ of Theorem \ref{main3} was proven in \cite{Ko}.

\subsection{\bf Proof of the upper bound of the limit in Theorem~\ref{main3}}
By the proof of Proposition 2.3 in~\cite{DSUZ} (see also~\cite{Ko} Proposition 6) for every $\epsilon > 0$ there exists
 $D_{\epsilon} \subset \Omega$ with measure 1 such that for $x \in D_{\epsilon}$,
$$
\limsup_{n \rightarrow \infty} \frac{1}{n} \log \sum_{i=1}^{\exp(n(h_{\mu}+ \frac{\epsilon}{3}))}\mu (A_n(T^i(x)))^s \leq h_{\mu} - sR(s) + \epsilon.
$$
Also by (\ref{OW}), we know that for $z \in D$ and therefore for all $x \in D^z\cap D_\epsilon$,
$$
\limsup_{n \rightarrow \infty} \frac{1}{n} \log W_n^s(x,z) \leq \limsup_{n \rightarrow \infty} \frac{1}{n} \log \sum_{i=1}^{\exp(n(h_{\mu}+ \frac{\epsilon}{3}))}\mu (A_n(T^i(x)))^s
\leq h_{\mu} - sR(s) + \epsilon .
$$
Finally, as $\mu \left ( \bigcap_{m=1}^{\infty} D_{1/m} \cap D^z \right ) = 1$, this establishes the upper
bound in Theorem~\ref{main3}.
 \qed

\subsection{Hitting numbers}\label{hitnum}
To prove the lower bound on the limit in Theorem~\ref{main3} we need estimates on the \emph{hitting number}
$$
N_{U,M}(x) = \sum_{i=0}^M \chi_U \circ T^{i}(x)
$$
of $U \in \sigma({\cal A}^n)$ (unions of $n$-cylinders),  where $\chi_U$ is the characteristic function of
the set $U$. $N_{U,M}(x)$ counts the number of times  $i\in[0,M]$ that $T^i(x) \in U$.
Similarly $\nu_x^z(U) =N_{U,\tau_n^z(x)  }(x)$ is the number of times that $x$ hits the set $U$ when it travels along its orbit segment until it returns to $A_n(z)$.
Following~\cite{DSUZ} it was shown in~\cite{Ko} that the variance of the hitting time can be estimated by
 $\Var(N_{U,M}) \leq c_1Mn \mu(U)$ for a constant $c_1$.

The following two lemmas provide us with lower and upper bounds for the
hitting time. For $z=x$ these results have been proven in~\cite{Ko} and here we give the modification
required for the present more general setting.

\begin{lemma}\label{lowerbound}
Let $\mu$ be an $\phi$-mixing $T$-invariant measure where $\phi(i)$ is summable and $U_n \in \sigma({\cal A}^n)$, $n = 1,2,...$, be a sequence of sets in $\sigma({\cal A}^n)$. Let $\epsilon > 0$ and assume $\gamma_n$ is a sequence of positive numbers so that for all $n$ large enough ($C,a,b>0$ constants):
Assume one of the following two conditions are satisfied:\\
(I) $\mu (U_n) \geq C e^{-\gamma _n (h_{\mu} + \epsilon)}$ and
$(n-\gamma _n)h_{\mu} - \epsilon (n+ \gamma_n) \geq an^b$,\\
(II) $\mu (U_n) \geq C e^{-\gamma_n (h_{\mu} - \epsilon)}$ and
$(n-\gamma_n)(h_{\mu} - \epsilon) \geq an^b$.

Then for almost every $(x,z)$,
$$
N_{U_n, \tau_n^z(x)  }(x) \geq \frac{\mu (U_n)}{2} e^{n(h_{\mu} - \epsilon)}
$$
for all $n$ large enough.
\end{lemma}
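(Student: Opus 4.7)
The plan is to reduce to a Chebyshev--Borel--Cantelli argument for the hitting number of the deterministic sets $U_n$, using the lower bound on $\tau_n^z(x)$ already furnished by Theorem~\ref{bridge}. Fix $\epsilon' \in (0,\epsilon)$ (to be chosen) and set $M_n = \lfloor e^{n(h_\mu - \epsilon')}\rfloor$. By Theorem~\ref{bridge}, $\tau_n^z(x) \geq M_n$ for all sufficiently large $n$ and $\mu\times\mu$-a.e.\ $(x,z)$. Since $N_{U_n,\cdot}(x)$ is non-decreasing in its upper index, it suffices to prove the $z$-free statement $N_{U_n,M_n}(x) \geq \mu(U_n)\,e^{n(h_\mu-\epsilon)}/2$ for $\mu$-a.e.\ $x$ and all large $n$.

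To this end I would invoke the $\phi$-mixing variance bound $\Var(N_{U_n,M_n}) \leq c_1 M_n n\,\mu(U_n)$ recalled in Section~\ref{hitnum}, together with Chebyshev's inequality, to obtain
$$
\mathbb{P}\bigl(N_{U_n,M_n} < M_n\mu(U_n)/2\bigr)\leq \frac{4 c_1 n}{M_n\,\mu(U_n)}.
$$
A short calculation shows that under hypothesis (I) one has $M_n\mu(U_n)\geq C\, e^{n(h_\mu-\epsilon') - \gamma_n(h_\mu+\epsilon)}$, and under (II) one has $M_n\mu(U_n)\geq C\, e^{(n-\gamma_n)(h_\mu-\epsilon)}$; in both cases the hypothesis bounds the exponent below by $an^b$ (possibly after absorbing the tiny $\epsilon$/$\epsilon'$ slack into a slightly smaller constant $a$), so the right-hand side above is dominated by $n\,e^{-an^b}$, which is summable in $n$ for any $a,b>0$. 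Borel--Cantelli then yields a set of full $\mu$-measure in $x$ on which $N_{U_n,M_n}(x) \geq M_n\mu(U_n)/2$ for all sufficiently large $n$, and a final choice of $\epsilon'$ close enough to $\epsilon$ ensures that $M_n/2 \geq e^{n(h_\mu-\epsilon)}/2$, delivering the desired bound.

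Intersecting the full-measure set in $x$ from Borel--Cantelli with the full-measure set in $(x,z)$ from Theorem~\ref{bridge} completes the argument. I do not expect any serious obstacle: the heart of the argument---the variance bound for hitting numbers of $\phi$-mixing systems---has already been established, and the hypotheses (I) and (II) are manifestly calibrated so that $M_n\mu(U_n)$ grows super-polynomially, making the Chebyshev tail summable. The only bookkeeping subtlety is keeping track of the slack between $\epsilon$ and $\epsilon'$ so that both the lower bound on $\tau_n^z(x)$ and the final stated conclusion come out with the same $\epsilon$.
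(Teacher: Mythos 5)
Your proposal is correct and follows essentially the same route as the paper: the paper also fixes a deterministic threshold $M=[e^{n(h_{\mu}-\epsilon)}]$, uses the variance estimate $\Var(N_{U_n,M})\le c_1Mn\mu(U_n)$ with Chebyshev and Borel--Cantelli (this step is delegated to \cite{Ko}, where hypotheses (I)/(II) supply the summability) to get $N_{U_n,M}(x)\ge M\mu(U_n)/2$ almost surely for large $n$, and then combines this with the almost sure lower bound $\tau_n^z(x)>e^{n(h_{\mu}-\epsilon)}$ from Theorem~\ref{bridge} and monotonicity of the hitting number in its upper index. Your explicit Chebyshev computation and the $\epsilon'<\epsilon$ bookkeeping simply spell out the details the paper cites from \cite{Ko}, so there is no substantive difference.
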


\noindent{\bf Proof}. (I) Put $M =  [e^{n(h_{\mu} - \epsilon)}]$. Using the estimate on the variance of
$N_{U_n,M}$ and Chebycheff's inequality it was shown in~\cite{Ko}  that there exists a set  $D'$, with measure 1,  such that, for all  $x \in D'$, and for all  $n$ large enough, it holds,
$\frac{N_{U_n,M}(x)}{M} \geq \mu (U_n)/2$.  Since by (\ref{OW}) for $z \in D$,
$x \in D^z \cap D'$, and for $n$ large enough, we have $\tau_n^z(x)   > e^{n(h_{\mu} - \epsilon)}$,
and therefore
$$
N_{U_n, \tau_n^z(x)  }(x) \geq N_{U_n,M}(x) \geq M \mu(U_n) /2 =  \mu (U_n) e^{n(h_{\mu} - \epsilon)}/2.
$$
Since $\mu(D^z \cap D') = 1$, the estimate follows.

Part (II) is proven similarly.
 \qed


\begin{lemma}\label{upperbound}
Let $\mu$ be as in Lemma \ref{lowerbound} and $U_n \in \sigma({\cal A}^n)$, $n = 1,2,...$, be a sequence of sets. Suppose there exists a constant $C >0$ so that $\mu (U_n) \geq C$ for all large enough $n$. Then for $\epsilon > 0$ and for almost every $x$,
\begin{displaymath}
N_{U_n, \tau_n^z(x)  }(x) \leq \frac{3 \mu (U_n)}{2} e^{n(h_{\mu} + \epsilon)}
\end{displaymath}
for all $n$ large enough.
\end{lemma}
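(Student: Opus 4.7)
The plan is to pass from the data-dependent stopping time $\tau_n^z(x)$ to the deterministic window $M_n := \lfloor e^{n(h_\mu+\epsilon)}\rfloor$, and then to control the truncated hitting number $N_{U_n,M_n}$ by a Chebyshev-plus-Borel-Cantelli argument based on the variance estimate $\Var(N_{U_n,M_n})\le c_1 M_n n\mu(U_n)$ recalled just above Lemma~\ref{lowerbound}. Structurally this mirrors Lemma~\ref{lowerbound}, but with an upper deviation in place of a lower one, and with the hypothesis $\mu(U_n)\ge C$ used to keep the relative fluctuations of $N_{U_n,M_n}$ small.

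First I would invoke Theorem~\ref{bridge}, equivalently inequality~(\ref{OW}): for $z\in D$ and $x\in D^z$, and hence for $\mu\times\mu$-almost every $(x,z)$, the integer $\tau_n^z(x)$ satisfies $\tau_n^z(x)<e^{n(h_\mu+\epsilon)}$ for all $n$ large enough, and therefore $\tau_n^z(x)\le M_n$. Since $N_{U_n,M}(x)$ is nondecreasing in $M$, it suffices to prove
\[
N_{U_n,M_n}(x)\le \tfrac{3}{2}\,\mu(U_n)\,e^{n(h_\mu+\epsilon)}
\]
for $\mu$-almost every $x$ and all large $n$.

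Next, by $T$-invariance, $\int N_{U_n,M_n}\,d\mu = M_n\mu(U_n)$. Chebyshev's inequality together with the variance bound gives
\[
\mu\bigl(\{x:N_{U_n,M_n}(x)>\tfrac{3}{2}M_n\mu(U_n)\}\bigr)
\le \frac{\Var(N_{U_n,M_n})}{(M_n\mu(U_n)/2)^2}
\le \frac{4c_1 n}{M_n\mu(U_n)}
\le \frac{4c_1 n}{C\,e^{n(h_\mu+\epsilon)}},
\]
which is summable in $n$. By the Borel--Cantelli lemma, for $\mu$-almost every $x$ we have $N_{U_n,M_n}(x)\le \tfrac{3}{2}M_n\mu(U_n)\le \tfrac{3}{2}\mu(U_n)e^{n(h_\mu+\epsilon)}$ for all $n$ large. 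Intersecting this full-measure set with the full $\mu\times\mu$-measure set provided by the first step completes the argument.

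The main obstacle is essentially just bookkeeping: the two substantive ingredients (the orbit-length bound~(\ref{OW}) and the $\phi$-mixing variance estimate) are already in hand, and the only subtlety is that~(\ref{OW}) controls $\tau_n^z(x)$ only on a full $\mu\times\mu$-measure set. Consequently the conclusion of the lemma is naturally an almost-every statement with respect to $\mu\times\mu$, which is exactly what is needed when Lemma~\ref{upperbound} is applied to bound $W_n^s(x,z)$ from below in the proof of Theorem~\ref{main3}.
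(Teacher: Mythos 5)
Your proposal is correct and follows essentially the same route as the paper: reduce $\tau_n^z(x)$ to the deterministic window $M=[e^{n(h_\mu+\epsilon)}]$ via (\ref{OW}) and monotonicity of $N_{U_n,M}$ in $M$, then use concentration of $N_{U_n,M}/M$ around $\mu(U_n)$. The only difference is that the paper cites \cite{Ko} for the bound $|N_{U_n,M}/M-\mu(U_n)|\le\mu(U_n)/2$, whereas you rederive it explicitly from the variance estimate via Chebyshev and Borel--Cantelli, using $\mu(U_n)\ge C$ exactly where it is needed for summability.
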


\noindent{\bf Proof}. For $M = [e^{n(h_{\mu} + \epsilon)}]$ it was shown in~\cite{Ko} that
$\left | \frac{N_{U_n,M}(x)}{M} - \mu (U_n) \right| \leq \mu (U_n)/2$
for all $n$ large enough. By (\ref{OW}) for $z \in D$, $x \in D^z \cap D'$, and for large enough $n$, we have $\tau_n^z(x)   < e^{n(h_{\mu} + \epsilon)}$, and hence $N_{U_n, \tau_n^z(x)  }(x) \leq N_{U_n,M}(x) \leq 3M \mu(U_n) /2 =  3 \mu (U_n) e^{n(h_{\mu} + \epsilon)}/2$ as desired. \qed

Using (\ref{SMB}) and Egoroff's Theorem, there exists a set $\cal E$ with measure greater than $1/2$ on which
$ |\log \mu(A_n(x))| /n$ converges to $h_{\mu}$ uniformly as $n\rightarrow\infty$. Define
$$
E_n := \{ x: A_n(x) \cap {\cal E} \neq \emptyset \},
$$
the union of those $n$-cylinders which intersect $\cal E$. As $E_n \in \sigma({\cal A}^n)$, let us apply
Lemma~\ref{lowerbound} and~\ref{upperbound} to obtain estimations on the hitting number of $E_n$.

\begin{corollary}\label{En}
For any positive $\epsilon < h_{\mu}$ and almost every $(x,z)$,
\begin{itemize}
\item[(I)] $\nu_x^z(E_n) \geq \frac{\mu(E_n)}{2} e^{n(h_{\mu} - \epsilon)}$;
\item[(II)] $\nu_x^z(E_n^c) \leq \frac{3 \mu(E_n^c)}{2} e^{n(h_{\mu} + \epsilon)}$
\end{itemize}
for all $n$ large enough (where  $\nu_x^z(U) = N_{U,\tau_n^z(x)  }(x)$).
\end{corollary}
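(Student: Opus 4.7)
The plan is to obtain both parts as immediate applications of Lemmas~\ref{lowerbound} and~\ref{upperbound} to the sets $U_n=E_n$ and $U_n=E_n^c$ respectively. The key structural observation is that since $E_n$ is the union of all $n$-cylinders meeting $\mathcal{E}$, every point of $\mathcal{E}$ lies in some such cylinder, so $\mathcal{E}\subseteq E_n$ and hence $\mu(E_n)\geq\mu(\mathcal{E})>1/2$ uniformly in $n$. Consequently $\mu(E_n^c)<1/2$ as well.

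For Part~(I), the uniform lower bound $\mu(E_n)\geq 1/2$ fits condition~(II) of Lemma~\ref{lowerbound} with the trivial choices $\gamma_n=0$, $C=1/2$, $a=h_\mu-\epsilon$ and $b=1$: the measure inequality $\mu(E_n)\geq Ce^{-\gamma_n(h_\mu-\epsilon)}$ collapses to $\mu(E_n)\geq 1/2$, and the growth condition $(n-\gamma_n)(h_\mu-\epsilon)\geq an^b$ reduces to the tautology $n(h_\mu-\epsilon)\geq n(h_\mu-\epsilon)$, which uses only $\epsilon<h_\mu$. The conclusion $N_{E_n,\tau_n^z(x)}(x)\geq\frac{\mu(E_n)}{2}e^{n(h_\mu-\epsilon)}$ of the lemma is precisely the claim of Part~(I), valid for $\mu\times\mu$-almost every $(x,z)$ and all sufficiently large $n$.

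For Part~(II), the plan is to apply Lemma~\ref{upperbound} to $U_n=E_n^c$. When $\mu(E_n^c)$ happens to be uniformly bounded below by a positive constant, the hypothesis of the lemma is met directly. When $\mu(E_n^c)\to 0$ along a subsequence, we re-run the Chebyshev argument that underlies Lemma~\ref{upperbound}: with $M=[e^{n(h_\mu+\epsilon)}]$, the variance estimate $\Var(N_{E_n^c,M})\leq c_1 Mn\mu(E_n^c)$ yields
$$
\mathbb{P}\!\left(\left|\frac{N_{E_n^c,M}}{M}-\mu(E_n^c)\right|>\frac{\mu(E_n^c)}{2}\right)\leq\frac{4c_1 n}{M\mu(E_n^c)},
$$
which remains summable as long as $\mu(E_n^c)$ does not decay super-exponentially in $n$; the degenerate case $\mu(E_n^c)=0$ makes the claim vacuous. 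Borel--Cantelli then delivers $N_{E_n^c,M}(x)\leq\frac{3\mu(E_n^c)}{2}M$ almost surely for large $n$, and combining with $\tau_n^z(x)<e^{n(h_\mu+\epsilon)}$ from (\ref{OW}) produces the stated bound on $\nu_x^z(E_n^c)$.

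The main technical point worth verifying is the Chebyshev step in Part~(II) when $\mu(E_n^c)$ is not uniformly positive; however, since $E_n^c$ is a union of $n$-cylinders contained in the fixed positive-measure set $\mathcal{E}^c$, a truly super-exponential collapse of $\mu(E_n^c)$ is the only obstruction, and in that regime the claim is either trivial or absorbed into the Borel--Cantelli step using the exponential-tail assumption on $\mathcal{A}$.
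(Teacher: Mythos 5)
Part (I) of your proposal is correct and is essentially the paper's own argument: both you and the paper feed $U_n=E_n$ into Lemma~\ref{lowerbound}(II), using $\mathcal{E}\subseteq E_n$ and $\mu(\mathcal{E})>1/2$ to verify the measure condition; the paper chooses $\gamma_n=n/2$ where you choose $\gamma_n=0$ (strictly, the lemma asks for positive $\gamma_n$, so take e.g.\ $\gamma_n=1$ and $a$ slightly below $h_\mu-\epsilon$ --- a cosmetic repair only).

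Part (II), however, has a genuine gap. Lemma~\ref{upperbound} needs a lower bound $\mu(E_n^c)\geq C>0$ uniform in $n$, and you rightly notice this can fail, but your fallback for the regime where $\mu(E_n^c)$ is positive yet small is not a proof. The Chebyshev bound $4c_1 n/(M\mu(E_n^c))$ with $M=[e^{n(h_\mu+\epsilon)}]$ is summable only when $\mu(E_n^c)$ decays no faster than roughly $e^{-n(h_\mu+\epsilon)}$; for faster decay your assertion that the claim is ``trivial or absorbed into the Borel--Cantelli step using the exponential-tail assumption on $\mathcal{A}$'' does not hold up: smallness of $\mu(E_n^c)$ alone does not force $\nu_x^z(E_n^c)=0$ (note the asserted bound $\tfrac{3}{2}\mu(E_n^c)e^{n(h_\mu+\epsilon)}$ may be below $1$, so the statement is a real constraint), and the tail condition (\ref{exponential_tail}) concerns $\bigcup_{i\geq j}\mathcal{P}_i$, not the sets $E_n^c$. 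Likewise the case $\mu(E_n^c)=0$ is not vacuous; one still needs that a.e.\ orbit avoids the null sets $T^{-i}(E_n^c)$, which requires the (one-line) countable-union argument. The missing idea that removes all of this is monotonicity: since $A_{n+1}(x)\subseteq A_n(x)$, one has $E_{n+1}\subseteq E_n$, hence $\mu(E_n^c)$ is non-decreasing in $n$. Therefore either $\mu(E_m^c)>0$ for some $m$, in which case $\mu(E_n^c)\geq\mu(E_m^c)$ for all $n\geq m$ and Lemma~\ref{upperbound} applies directly with $C=\mu(E_m^c)$, or else $\mu(E_n^c)=0$ for every $n$, in which case $B=\bigcap_{n\geq1}\bigcap_{i\geq0}T^{-i}(E_n)$ has full measure and $\nu_x^z(E_n^c)=0$ for all $x\in B$. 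This is exactly how the paper argues; the intermediate ``decaying'' regime you try to handle simply cannot occur, but as written your proof leaves it unresolved.
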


\noindent{\bf Proof}. (I) We use Lemma \ref{lowerbound}(II) with
$U_n = E_n$ and $\gamma_n = n/2$ for any $a,b > 0$. In order to
verify the second part of the condition of
Lemma~\ref{lowerbound}(II) note that $\mu(E_n) \geq \mu({\cal E})
\geq 1/2$ for all $n$. Thus $\mu (E_n) \geq  e^{-\gamma_n (h_{\mu} -
\epsilon)}$ for all large enough $n$ and Lemma~\ref{lowerbound}(II)
 gives the desired result.
 (II) First suppose that $\mu(E_m) < 1$ for some integer $m$. Since $E_{n+1} \subseteq E_n$
 (as $A_{n+1}(x) \subseteq A_n(x)$) we conclude that $\mu(E_n^c) \geq \mu(E_m^c)$ for all $n>m$.
 Hence by Lemma \ref{upperbound} (with $C = \mu(E_m^c)$) we are done.
 If $\mu(E_n) = 1$ for all $n$, then put
$$
B=\bigcap_{n \geq 1} \bigcap_{i \geq 0} T^{-i}(E_n).
$$
Then $\mu(B) = 1$ and for $x \in B$, $T^i(x) \notin E_n^c$ for any $n$. Hence for almost every $x$, $\nu_x^z(E_n^c) = 0 = \frac{3 \mu(E_n^c)}{2} e^{n(h_{\mu} + \epsilon)}$ for all $n$. \qed


\subsection{Proof of the lower bound of the limit in Theorem~\ref{main3}}
Let $c, \alpha \in (0,1)$, and $\epsilon>0$ be a small number which depends on $h_{\mu}$ and $c$ (which is close to 1), and will be determined later. Define $\tilde{\gamma}_n =n - [n^{\alpha}]$ and $\Delta = [n^{\alpha}]$.
Denote by $\tilde{A}_n(x) \in {\cal A}^{\tilde{\gamma}_n}$ and $\bar{A}_n(x) \in {\cal A}^{[cn]}$ the
 $\tilde{\gamma}_n$-cylinder and the $[cn]$-cylinder which contain $x$ respectively.
  As $n > \tilde{\gamma}_n > cn$ (for $n$ large enough), we have
   $A_n(x) \subset \bar{A}_n(x) \subset \tilde{A}_n(x)$. In the following we denote an $n$-cylinder by
   $A_n$ or $A$, a $[cn]$-cylinder by $\bar{A}_n$ or $\bar{A}$, and a $\tilde{\gamma}_n$-cylinder by $\tilde{A}_n$ or $\tilde{A}$. For $\epsilon > 0$, there exists $K_{\epsilon}$ such that for any $n > K_{\epsilon}$ , we have
\begin{equation} \label{uniformality}
e^{-n(h_{\mu} + \epsilon)} \leq \mu(A_n(x)) \leq e^{-n(h_{\mu} - \epsilon)}
\end{equation}
for $x \in {\cal E}$. Note that for $n>K_{\epsilon}$ and if the $n$-cylinder $A \subset E_n$, we have $A=A_n(x)$ for some $x \in {\cal E}$ and hence $\mu(A)$ satisfies (\ref{uniformality}). From now on, we assume $n$ is large enough so that $n > \tilde{\gamma}_n > cn > K_{\epsilon}$ and hence (\ref{uniformality}) holds with $n$ replaced by $\tilde{\gamma}_n$ and $cn$. The inequality (\ref{uniformality}) shows the uniformity property of the measures of cylinders in the sense that when $x \in {\cal E}$, we have
\begin{equation}\label{regular}
\mu (\bar{A}_n(x))  \leq e^{-cn(h_{\mu}-\epsilon)}
 \leq  \mu (A_n(x))e^{(1-c)nh_{\mu}+ 2n \epsilon}.
\end{equation}
If we put
$$
\bar{E}_n  =  \{x: \bar{A}_n(x) \cap {\cal E} \neq \emptyset \},
$$
(the union of $[cn]$-cylinders which intersect $\cal {\cal E}$) then  $E_n \subseteq \bar{E}_n \subseteq {\cal E}$.

Let $\varepsilon = \frac{1-c}{1+c} h_{\mu}$ (recall that $h_{\mu}$ is positive, by the comment after (\ref{exponential_tail})), and from now on we choose $\epsilon < \varepsilon$.
When we let $\varepsilon \rightarrow 0$, we have both $\epsilon \rightarrow 0$ and $c \rightarrow 1$.
For convenience we also put
\begin{eqnarray*}
\tilde{E}_n^+ & = & \{\tilde{A} \in {\cal A}^{\tilde{\gamma}_n} : \mu( \tilde{A}) \geq e^{-\tilde{\gamma}_n(h_{\mu} - \epsilon)} \}; \\
\tilde{E}_n^- & = & \{\tilde{A} \in {\cal A}^{\tilde{\gamma}_n} : \mu( \tilde{A}) \leq e^{-\tilde{\gamma}_n(h_{\mu} - \epsilon)} \}.
\end{eqnarray*}
According to~\cite{Ko} the lower bound on the limit in Theorem~\ref{main3} follows immediately from the following two lemmas.

\begin{lemma} \label{part1}
There exists some constant $K_{\ref{part1}}$, which depends only on $s$, so that for almost every $x$,
$$
W_n^s(x,z) \geq e^{K_{\ref{part1}}n\varepsilon} e^{nh_{\mu}} \sum_{\tilde{A} \in \tilde{E}_n^-} \mu (\tilde{A})^{1+s}
$$
for all $n$ large enough.
\end{lemma}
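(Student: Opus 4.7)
The plan is to decompose $W_n^s(x,z)$ by grouping orbit times $i\in[1,\tau_n^z(x)]$ according to the $\tilde\gamma_n$-cylinder containing $T^ix$, restrict to ``doubly good'' groupings, and recover the asymptotic via a $\phi$-mixing variance argument combined with a power-mean inequality inside each $\tilde A$. Set $V_n=\bigcup_{\tilde A\in\tilde E_n^-}\tilde A$ and $G_n=E_n\cap V_n$. For $y\in G_n$, (\ref{uniformality}) gives $\mu(A_n(y))\geq e^{-n(h_\mu+\epsilon)}$, while the parent $\tilde\gamma_n$-cylinder satisfies $\mu(\tilde A_n(y))\leq e^{-\tilde\gamma_n(h_\mu-\epsilon)}$. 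Since (\ref{SMB}) yields $\mu(V_n)\to 1$, we have $\mu(G_n)>1/3$ for $n$ large. Put $g_n(y)=\chi_{G_n}(y)\mu(A_n(y))^s$ (constant on $n$-cylinders) and $M=[e^{n(h_\mu-\epsilon)}]$.

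First, I would adapt the $\phi$-mixing variance estimate underlying Lemma~\ref{lowerbound} (which gives $\Var(N_{U,M})\leq c_1Mn\mu(U)$ for unions of $n$-cylinders) to the step function $g_n$, obtaining $\Var\bigl(\sum_{i=1}^M g_n\circ T^i\bigr)\leq c\,Mn\,\|g_n\|_\infty\int g_n\,d\mu$. Using $\|g_n\|_\infty\leq e^{-sn(h_\mu-\epsilon)}$ and $\int g_n\,d\mu\geq\tfrac13 e^{-sn(h_\mu+\epsilon)}$, the Chebyshev failure probability for $\sum_{i=1}^M g_n\circ T^i\geq\tfrac12 M\int g_n\,d\mu$ is dominated by $ne^{-n(h_\mu-(4s+1)\epsilon)}$, which is summable once $\epsilon<h_\mu/(4s+1)$. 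Borel-Cantelli together with $\tau_n^z(x)\geq M$ a.s.\ from~(\ref{OW}) then gives, for $\mu$-a.e.\ $x$ and $n$ large,
\[
W_n^s(x,z)\geq \tfrac12\,e^{n(h_\mu-\epsilon)}\sum_{\tilde A\in\tilde E_n^-}\sum_{\substack{A\subset\tilde A\\ A\cap\mathcal E\neq\emptyset}}\mu(A)^{1+s}.
\]

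Next, for each $\tilde A\in\tilde E_n^-$ with $\mu(\tilde A\cap E_n)\geq\tfrac12\mu(\tilde A)$, the number $N_{\tilde A}$ of $n$-cylinders $A\subset\tilde A\cap E_n$ is at most $\mu(\tilde A)e^{n(h_\mu+\epsilon)}$, so the power-mean inequality (exponent $1+s>1$) yields
\[
\sum_{A\subset\tilde A\cap E_n}\mu(A)^{1+s}\geq N_{\tilde A}^{-s}\mu(\tilde A\cap E_n)^{1+s}\geq 2^{-(1+s)}\mu(\tilde A)^{1+s}\,e^{-s\Delta h_\mu-s(n+\tilde\gamma_n)\epsilon},
\]
using $\mu(\tilde A)^{-s}\geq e^{s\tilde\gamma_n(h_\mu-\epsilon)}$. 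Replacing $\mathcal E$ by a larger Egoroff set of measure $>1-\eta$ renders the ``bad'' $\tilde A$'s (those failing the half-mass condition) negligible in $\sum_{\tilde E_n^-}\mu(\tilde A)^{1+s}$. Collecting factors, the overall prefactor is $Me^{-s\Delta h_\mu-s(n+\tilde\gamma_n)\epsilon}=e^{nh_\mu-n\epsilon-s\Delta h_\mu-s(n+\tilde\gamma_n)\epsilon}$; since $\Delta=n^\alpha=o(n)$ and $\epsilon<\varepsilon$, the exponent is bounded below by $-(2s+2)n\varepsilon$ for $n$ large, and the remaining $s$-dependent constants absorb into $e^{K_{\ref{part1}}n\varepsilon}$ for a suitable constant $K_{\ref{part1}}$.

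The main technical obstacle is the extension of the $\phi$-mixing variance bound from indicator functions of unions of $n$-cylinders (as used in Lemma~\ref{lowerbound}) to the weighted step function $g_n$. The argument should carry through with the same decomposition (splitting pairs $(i,j)$ according to whether $|i-j|<n$ or $|i-j|\geq n$, and invoking summability of $\phi$), but one must verify that the resulting bound scales as $\|g_n\|_\infty\int g_n\,d\mu$ rather than merely $\bigl(\int g_n\,d\mu\bigr)^2$, as this scaling is what makes the Chebyshev error summable in $n$ for $\epsilon$ small enough.
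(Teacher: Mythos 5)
Your strategy is genuinely different from the paper's. The paper's proof works through the intermediate scale $[cn]$: on the good set $E_n$ it replaces $\mu(A_n(T^ix))^s$ by $\mu(\bar{A}_n(T^ix))^s$ via (\ref{regular}), controls the resulting sums $S_1,S_2$ over $[cn]$-cylinders with the hitting-number estimates of Corollary~\ref{En} and with Lemma~\ref{lowerbound}(I) applied cylinder-by-cylinder to every $\bar{A}\subset\bar{E}_n$, and then converts sums over $[cn]$-cylinders into sums over $\tilde{\gamma}_n$-cylinders as in~\cite{Ko}. You instead run a single Chebyshev/Borel--Cantelli argument for the weighted step function $g_n=\chi_{G_n}\,\mu(A_n(\cdot))^s$ and then use the power-mean inequality inside each $\tilde{A}\in\tilde{E}_n^-$; this avoids the scale $[cn]$, the sets $\bar{E}_n$, and the per-cylinder use of Lemma~\ref{lowerbound}, which is a real simplification. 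The technical point you flag does go through: since (\ref{mixing}) holds for every $B\in\sigma(\mathcal{A}^*)$, one gets $\bigl|\int_A h\circ T^{n+k}\,d\mu-\mu(A)\int h\,d\mu\bigr|\le 2\phi(k)\mu(A)\|h\|_\infty$ for every $A\in\mathcal{A}^n$ and bounded $\sigma(\mathcal{A}^*)$-measurable $h\ge0$, and the usual split of pairs $(i,j)$ into $|i-j|\le n$ and $|i-j|>n$ then yields $\Var\bigl(\sum_{i\le M}g_n\circ T^i\bigr)\le c\,Mn\,\|g_n\|_\infty\int g_n\,d\mu$; with your bounds on $\|g_n\|_\infty$ and $\int g_n\,d\mu$ the Chebyshev error is summable, and (\ref{OW}) supplies $\tau_n^z(x)\ge M$ exactly as in the paper. (Note also that the exponent in the statement should be $e^{-K_{\ref{part1}}n\varepsilon}$, as at the end of the paper's proof; that is what both arguments produce.)

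There is, however, one genuine gap: the dismissal of the ``bad'' cylinders $\tilde{A}\in\tilde{E}_n^-$ with $\mu(\tilde{A}\cap E_n)<\tfrac12\mu(\tilde{A})$. Their total measure is indeed at most $2\mu(E_n^c)\le2\eta$, but $\tilde{E}_n^-$ imposes only an upper bound $\mu(\tilde{A})\le e^{-\tilde{\gamma}_n(h_\mu-\epsilon)}$ and no lower bound, so smallness of total measure does not make the bad contribution a small fraction of $\sum_{\tilde{A}\in\tilde{E}_n^-}\mu(\tilde{A})^{1+s}$: if the relatively few largest cylinders of $\tilde{E}_n^-$ are all bad while the good mass sits on cylinders of much smaller measure, the bad cylinders dominate the weighted sum no matter how small $\eta$ is, so enlarging the Egoroff set by itself proves nothing here. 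The step can be repaired because the lemma tolerates a loss $e^{-Cn\varepsilon}$: by (\ref{SMB}) the cylinders of $\tilde{E}_n^-$ with $\mu(\tilde{A})\ge e^{-\tilde{\gamma}_n(h_\mu+\epsilon)}$ carry measure tending to $1$, so for large $n$ the good ones among them carry measure at least $\tfrac12$, giving a good contribution at least $\tfrac12 e^{-s\tilde{\gamma}_n(h_\mu+\epsilon)}$, while the bad contribution is at most $2\eta\, e^{-s\tilde{\gamma}_n(h_\mu-\epsilon)}$; hence the bad part is at most $4\eta\,e^{2sn\epsilon}$ times the good part, and the good cylinders alone account for at least $\tfrac12 e^{-2sn\epsilon}\sum_{\tilde{A}\in\tilde{E}_n^-}\mu(\tilde{A})^{1+s}$, a loss your final bookkeeping absorbs into $K_{\ref{part1}}$. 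You need to insert this comparison (or an equivalent one); as written, the negligibility claim is false in general.
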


\noindent{\bf Proof.} We proceed in three steps:\\
(I) We have
$$
W_n^s(x,z) \geq  \sum_{i=1}^{\tau_n^z(x)  } \mu (A_n(T^i(x)))^s \chi_{E_n}(T^i(x))
 \geq  e^{s((c-1)nh_{\mu} - 2n \epsilon)} S_1
 \geq  e^{-4sn\varepsilon} S_1
$$
as  $\epsilon,1-c<\varepsilon$, where
$S_1 = \sum_{i=1}^{\tau_n^z(x)  } \mu (\bar{A}_n(T^i(x)))^s \chi_{E_n}(T^i(x))$.\\
(II) Put $S_2 =  \sum_{i=1}^{\tau_n^z(x)  } \mu
(\bar{A}_n(T^i(x)))^s \chi_{\bar{E}_n}(T^i(x))$. Then $S_1 \leq S_2$
as $E_n \subseteq \bar{E}_n$. If $T^i(x) \in E_n$, then by
(\ref{uniformality}) we have $\mu(\bar{A}_n(T^i(x))) \geq
e^{-cn(h_{\mu} + \epsilon)}$ and consequently by Corollary
\ref{En}(I) (as $\mu(\bar{A}_n(T^i(x)))\ge e^{-n(h_\mu+\epsilon)}$
for all $x\in E_n$) 
\begin{equation}\label{S1}
S_1  \geq  \sum_{i=1}^{\tau_n^z(x)  } e^{-scn(h_{\mu} + \epsilon)} \chi_{E_n}(T^i(x))
 =  \nu_x^z(E_n) e^{-scn(h_{\mu} + \epsilon)}
 \geq  \frac{\mu(E_n)}{2}e^{n(h_{\mu} - \epsilon)} e^{-scn(h_{\mu} + \epsilon)}.
\end{equation}
Meanwhile, by Corollary \ref{En}(II) (as $\mu(\bar{A}_n(T^i(x)))\le e^{-n(h_\mu-\epsilon)}$ for all $x\in E_n$)
we also have
\begin{eqnarray}\label{S2}
S_2 - S_1 & = & \sum_{i=1}^{\tau_n^z(x)  } \mu (\bar{A}_n(T^i(x)))^s \chi_{\bar{E}_n \backslash E_n}(T^i(x))  \nonumber \\
& \leq & \sum_{i=1}^{\tau_n^z(x)  } e^{-scn(h_{\mu} - \epsilon)} \chi_{{E_n}^c}(T^i(x)) \nonumber \\
& = & \nu_x^z(E_n^c) e^{-scn(h_{\mu} - \epsilon)} \nonumber \\
& \leq & \frac{3\mu(E_n^c)}{2}e^{n(h_{\mu} + \epsilon)} e^{-scn(h_{\mu} - \epsilon)}.
\end{eqnarray}
Since $\mu({E_n}^c) < 1/2 < \mu(E_n)$, we get
$$
S_2 -S_1 \leq 3 \left [ \frac{1}{2} \mu(E_n)e^{n(h_{\mu} - \epsilon)} e^{-scn(h_{\mu} + \epsilon)} \right ]
e^{2n \epsilon}  e^{2scn \epsilon}.
$$
By (\ref{S1}) the quantity in the bracket of the above inequality is less than $S_1$. Consequently
 $S_2 - S_1 \leq 3 e^{2n \epsilon(1+sc)}S_1$ and
$$
S_2  \leq   e^{4n \epsilon(1+sc)}S_1
 \leq  e^{4(1+s) \varepsilon n}S_1
$$
as $c < 1$ and $\epsilon < \varepsilon$. \\
(III) We also have
$$
S_2 = \sum_{\bar{A} \subset \bar{E}_n} \nu_x^z(\bar{A}) \mu(\bar{A})^s
$$
using the counting function $\nu_x^z(\bar{A})$ for which we have bounds by Lemma \ref{lowerbound}(I):
since for $\bar{A} \subset \bar{E}_n$, one has $\mu (\bar{A}) \geq e^{-cn(h_{\mu} + \epsilon)}$, and hence the first condition of Lemma \ref{lowerbound}(I) is fulfilled (with $\gamma_n = cn$). The second condition,
 $(n-cn)h_{\mu} - \epsilon(n+cn)$ $=$ $((1-c)h_{\mu} - \epsilon (1+c))n > 0$ follows from
 $\epsilon < \varepsilon = \frac{(1-c)}{(1+c)}h_{\mu}$. Therefore, by Lemma \ref{lowerbound}(I),
$$
S_2  \geq  \sum_{\bar{A} \subset \bar{E}_n} \frac{\mu (\bar{A})}{2} e^{n(h_{\mu} - \epsilon)} \mu(\bar{A})^s
 =  \frac{1}{2} e^{n(h_{\mu} - \epsilon)} \sum_{\bar{A} \subset \bar{E}_n}\mu(\bar{A})^{1+s}
$$
and we conclude as in~\cite{Ko} that
$S_2 \geq  e^{n(h_{\mu}-\varepsilon)} S_3$, where
$S_3 = \sum_{\tilde{A} \subset \bar{E}_n} \mu(\tilde{A})^{1+s}$.
Finally we use the fact from~\cite{Ko} that  $S_3 \geq e^{-2sn\varepsilon} S_4$, where $S_4 = \sum_{\tilde{A} \in \tilde{E}_n^-} \mu (\tilde{A})^{1+s}.$

We thus obtain for some constant $c_1$ independent of $\varepsilon$:
$$
W_n^s(x,z)\ge e^{-c_1n\varepsilon} e^{nh_{\mu}} S_4.
$$
 \qed

\begin{lemma} \label{part2}
There exists some constant $K_{\ref{part2}}$, which depends only on $s$, so that for almost every $x$,
$$
W_n^s(x,z) \geq e^{-K_{\ref{part2}}n\varepsilon}e^{nh_{\mu}} \sum_{\tilde{A} \in \tilde{E}_n^+} \mu (\tilde{A})^{1+s}
$$
for all $n$ large enough.
\end{lemma}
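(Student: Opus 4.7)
The plan parallels Lemma~\ref{part1} but targets the $\tilde\gamma_n$-cylinders of atypically large measure. The advantage for $\tilde A\in\tilde E_n^+$ is that $\mu(\tilde A)\ge e^{-\tilde\gamma_n(h_\mu-\epsilon)}$, which lets me apply Lemma~\ref{lowerbound}(II) directly to $U_n=\tilde A\in\sigma(\mathcal A^n)$ with $\gamma_n=\tilde\gamma_n$: the measure hypothesis is the defining inequality of $\tilde E_n^+$, and the second hypothesis $(n-\tilde\gamma_n)(h_\mu-\epsilon)=\Delta(h_\mu-\epsilon)\ge an^b$ holds with $b=\alpha$ and $a=(h_\mu-\epsilon)/2$ for $n$ large. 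This yields $\nu_x^z(\tilde A)\ge\frac12\mu(\tilde A)e^{n(h_\mu-\epsilon)}$ uniformly in $\tilde A\in\tilde E_n^+$, the uniformity coming from a Borel--Cantelli argument applied to the Chebyshev bound arising from the variance estimate of Section~\ref{hitnum}, with the exponential-tail hypothesis keeping the effective number of sets under control.

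Writing $n_A(x,z)$ for the number of visits to an $n$-sub-cylinder $A\subset\tilde A$, the contribution of visits to $\tilde A$ to $W_n^s$ is $\sum_{A\subset\tilde A} n_A\mu(A)^s$. The $\phi$-mixing condition with summable $\phi$ forces visits to distribute over the sub-cylinders of $\tilde A$ roughly in proportion to their measures, so this sum is comparable to $(\nu_x^z(\tilde A)/\mu(\tilde A))\sum_{A\subset\tilde A}\mu(A)^{1+s}$. I would then invoke the power-mean (Jensen) inequality $\sum_{A\subset\tilde A}\mu(A)^{1+s}\ge N_{\tilde A}^{-s}\mu(\tilde A)^{1+s}$, where $N_{\tilde A}$ is the number of non-empty $n$-sub-cylinders of $\tilde A$. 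The exponential-tail condition~(\ref{exponential_tail}) lets me restrict, up to negligible loss, to sub-cylinders whose trailing $\Delta=[n^\alpha]$ symbols come from the first $O(n^\alpha)$ partition elements, so that $N_{\tilde A}\le e^{o(n)}$ uniformly in $\tilde A$, and hence $N_{\tilde A}^{-s}\ge e^{-n\varepsilon}$ for $n$ large.

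Combining these bounds and summing over $\tilde A\in\tilde E_n^+$ produces
\[
W_n^s(x,z)\ge e^{-Kn\varepsilon}\,e^{n(h_\mu-\epsilon)}\sum_{\tilde A\in\tilde E_n^+}\mu(\tilde A)^{1+s}\ge e^{-K_{\ref{part2}}n\varepsilon}\,e^{nh_\mu}\sum_{\tilde A\in\tilde E_n^+}\mu(\tilde A)^{1+s},
\]
using $\epsilon<\varepsilon$ in the last step for a suitable constant $K_{\ref{part2}}$ depending only on $s$.

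The main technical obstacle is making the informal ergodic step---that visits to $\tilde A$ distribute proportionally to sub-cylinder measures---rigorous, together with the simultaneous hitting estimate for all $\tilde A\in\tilde E_n^+$. I expect the cleanest implementation to use a dyadic decomposition of the $n$-sub-cylinders of $\tilde A$ by size: group together those with $\mu(A)\in[\eta/2,\eta]$ into a single $B_{\tilde A}(\eta)\in\sigma(\mathcal A^n)$, and apply Lemma~\ref{lowerbound}(II) to each $B_{\tilde A}(\eta)$ of measure at least $e^{-\tilde\gamma_n(h_\mu-\epsilon)}$ to obtain the required lower bound on the hitting count of that slice. The dyadic slices too small for Lemma~\ref{lowerbound}(II) to apply contribute negligibly to $\sum_{A\subset\tilde A}\mu(A)^{1+s}$ and can be discarded. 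The exponential-tail condition is essential throughout to keep the union bound over slices (and over $\tilde A$) sub-exponential in $n$.
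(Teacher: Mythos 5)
Your opening step is fine: for $\tilde A\in\tilde E_n^+$ one may indeed apply Lemma~\ref{lowerbound}(II) with $U_n=\tilde A$, $\gamma_n=\tilde\gamma_n$, getting $\nu_x^z(\tilde A)\ge\frac12\mu(\tilde A)e^{n(h_\mu-\epsilon)}$. The argument breaks immediately afterwards. That hitting bound says nothing about \emph{where inside} $\tilde A$ the visits land, and the quantity you must control is $\sum_{A\subset\tilde A}n_A\mu(A)^s$; in the worst case all visits fall into $n$-sub-cylinders of negligible measure and contribute essentially nothing. Your assertion that $\phi$-mixing forces the visits to distribute in proportion to sub-cylinder measures is precisely the point that needs proof, and it is not reachable with the tools at hand: the variance/Chebyshev estimate behind Lemma~\ref{lowerbound} requires $\mu(U_n)\ge Ce^{-\gamma_n(h_\mu\pm\epsilon)}$ with $n-\gamma_n$ of order $n^b$, whereas an individual $n$-sub-cylinder has measure of order $e^{-nh_\mu}$, forcing $\gamma_n\approx n$ and violating the second hypothesis. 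The dyadic repair does not close the gap either. The slices of total measure below the threshold $e^{-\tilde\gamma_n(h_\mu-\epsilon)}$ cannot simply be discarded: for $\tilde A$ near the defining threshold of $\tilde E_n^+$ their contribution to $\sum_{A\subset\tilde A}\mu(A)^{1+s}$ can be of order $\mu(\tilde A)^s e^{-\tilde\gamma_n(h_\mu-\epsilon)}\approx\mu(\tilde A)^{1+s}$, which dominates the power-mean lower bound $N_{\tilde A}^{-s}\mu(\tilde A)^{1+s}$ you keep (the factor $N_{\tilde A}^{-s}$ is sub-exponentially small), so the discarded part can swallow the entire bound. In addition, applying Lemma~\ref{lowerbound}(II) simultaneously to all slices of all $\tilde A\in\tilde E_n^+$ is not a direct consequence of the lemma as stated: there are up to $e^{\tilde\gamma_n(h_\mu-\epsilon)}$ such sets, while the per-set failure probability coming from Chebyshev is only of order $e^{-cn^\alpha}$.

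The paper's proof avoids sub-cylinder sums altogether, which is the missing idea. It introduces ${\cal G}_n=\{x:\mu(A_n(x))\ge e^{-\Delta^\beta}\mu(\tilde A_n(x))\}$ with $1<\beta<1/\alpha$, so that every visit to $\tilde A\cap{\cal G}_n$ contributes at least $e^{-s\Delta^\beta}\mu(\tilde A)^s$ to $W_n^s$, a loss which is $e^{-o(n)}$ since $\alpha\beta<1$. The hitting estimate of Lemma~\ref{lowerbound}(II) is then applied to the single set $U_n=\tilde A\cap{\cal G}_n$, and the substantive input, taken from~\cite{Ko} via the sets $F_{k_n,\Delta}$, the mixing property and the exponential tail~(\ref{exponential_tail}), is the measure estimate $\mu(\tilde A\cap{\cal G}_n)\ge\mu(\tilde A)/2$ for large $n$: the part of $\tilde A$ lying in $n$-cylinders of relative measure smaller than $e^{-\Delta^\beta}$ carries at most half of $\mu(\tilde A)$. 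This yields $\nu_x^z(\tilde A\cap{\cal G}_n)\ge\frac{\mu(\tilde A)}{4}e^{n(h_\mu-\epsilon)}$ and hence the lemma directly, with $\mu(\tilde A)^{1+s}$ appearing without any power-mean inequality or cardinality count. If you wish to keep your slice picture, the slices to retain are exactly those with $\eta\ge e^{-\Delta^\beta}\mu(\tilde A)$, and what must be proved is that their union carries at least half of $\mu(\tilde A)$ --- which is exactly the $\mu(\tilde A\cap{\cal G}_n)$ estimate above; note that the exponential tail is needed there to control measure, not merely to count sub-cylinders as in your sketch.
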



\noindent{\bf Proof.} Let $\beta > 1$ and define ($\Delta = [n^{\alpha}]$)
\begin{displaymath}
{\cal G}_n=\left \{x: \mu (A_n(x)) \geq \exp({-\Delta^{\beta}}) \mu (\tilde{A}_n(x)) \right \}.
\end{displaymath}
Then ${\cal G}_n$ is a union of $n$-cylinders, since by definition if $x \in {\cal G}_n$, we have $A_n(x) \subseteq {\cal G}_n$. Moreover put
$$
F_{j, \Delta} = \bigcap_{i = 1}^{\Delta} T^{-i}\left ( \bigcup_{m=1}^{j-1} {\cal P}_m \right )
$$
for $j \in \mathbb{N}$. Note that if $x \in F_{j, \Delta}$, then for $1 \leq i \leq \Delta$, $T^i(x) \notin {\cal P}_k$ for all $k \geq j$. The set $F_{j, \Delta}$ is a finite union of $\Delta$-cylinders and consists of point $x$ that do not hit the ``tail" $\bigcup_{m = j}^{\infty} {\cal P}_m$ for the first $\Delta$ iterates. Obviously, $F_{j, \Delta} \subseteq F_{j+1, \Delta}$ and $F_{j, \Delta'} \subseteq F_{j, \Delta}$ for $\Delta' > \Delta$. We will consider the sets $F_{k_n, \Delta}$ for $k_n = [n^t], t > 1$.

We make use of ${\cal G}_n$ to compare the summands $\mu(A_n(T^i(x)))$ and $\mu(\tilde{A}_n(T^i(x)))$ as follows:
\begin{eqnarray} \label{claim1}
W_n^s(x,z)& \geq & \sum_{i=1}^{\tau_n^z(x)  } \mu (A_n(T^i(x)))^s  \chi_{{\cal G}_n}(T^i(x)) \nonumber \\
& \geq &e^{-s \Delta ^{\beta}} \sum_{i=1}^{\tau_n^z(x)  } \mu (\tilde{A}_n(T^i(x)))^s \chi_{{\cal G}_n}(T^i(x)) \nonumber \\
& = & e^{-s \Delta ^{\beta}} \sum_{\tilde{A} \in {\cal A}^{\tilde{\gamma}_n}} \mu (\tilde{A})^s \nu_x^z(\tilde{A} \cap {\cal G}_n) \nonumber \\
& \geq &e^{-s \Delta ^{\beta}} \sum_{ \tilde{A} \in \tilde{E}_n^+} \mu (\tilde{A})^s \nu_x^z(\tilde{A} \cap {\cal G}_n).
\end{eqnarray}
The first inequality is true since $0 \leq \chi_{{\cal G}_n} \leq 1$. The second inequality follows from the definition of ${\cal G}_n$. The last inequality is valid since we restrict the sum to a subcollection of $\tilde{A}$.
In order to apply Lemma \ref{lowerbound}(II) to obtain a lower bound of $\nu_x^z(\tilde{A} \cap {\cal G}_n)$, we write
\begin{eqnarray}
\mu(\tilde{A} \cap {\cal G}_n) \nonumber & \geq & \mu \left (\tilde{A} \cap T^{-\tilde{\gamma}_n} (F_{k_n, \Delta}) \cap {{\cal G}_n} \right) \nonumber \\
& = & \left [ \frac{\mu \left( \tilde{A} \cap T^{-\tilde{\gamma}_n}(F_{k_n, \Delta}) \right)}{\mu(\tilde{A})} - \frac{\mu \left( \tilde{A} \cap T^{-\tilde{\gamma}_n}(F_{k_n, \Delta}) \cap {\cal G}_n^c \right)}{\mu(\tilde{A})} \right ] \mu(\tilde{A}). \label{good}
\end{eqnarray}
In~\cite{Ko} it was shown that the quantity inside the bracket goes to 1 as $n$ tends to $\infty$
(The first term converges to $1$ and the second term converges to $0$.)
Thus $\frac{\mu(\tilde{A} \cap {\cal G}_n)}{\mu(\tilde{A})}\rightarrow1$ as $n\rightarrow\infty$
and in particular for large enough $n$,
$\mu(\tilde{A} \cap {\cal G}_n) \geq \mu(\tilde{A}) /2 \geq e^{-\tilde{\gamma}_n(h_{\mu} - \epsilon)}/2$
if $\tilde{A} \in \tilde{E}_n^+$. Let us now apply Lemma \ref{lowerbound}(II) where we put $\gamma_n = \tilde{\gamma}_n$.  Hence
$$
\nu_x^z(\tilde{A} \cap {\cal G}_n) \geq \frac{\mu(\tilde{A} \cap {\cal G}_n)}{2} e^{n(h_{\mu} - \epsilon)}
 \geq \frac{\mu( \tilde{A})}{4} e^{n(h_{\mu} - \epsilon)}.
$$
From (\ref{claim1}) one thus obtains
$$
W_n^s(x,z) \geq \frac{\exp(-s\Delta^{\beta})}{4} \exp(-n \epsilon) \exp(nh_{\mu})\sum_{\tilde{A} \in \tilde{E}^+_n} \mu(\tilde{A})^{1+s}.
$$
Now let $\beta \in (1,  1/ \alpha)$ so that $\alpha \beta < 1$, then $n$ dominates $\Delta ^{\beta} = [n^{\alpha}]^{\beta}$ and hence there exists $K_{\ref{part2}}$ so that for large enough $n$, $e^{-n \epsilon}e^{-s\Delta^{\beta}}/4 \geq e^{-K_{\ref{part2}}n \varepsilon}$.
\qed

\section*{Acknowledgment}
Part of this research was
carried out during a visit of the fourth named author to the
Mathematics Department at the University of Southern California, Los
Angeles, in 2012. She thanks this department for its hospitality.


\end{document}